\documentclass[12pt]{amsart}
\usepackage{amsthm,amsmath,amssymb,mathrsfs}
\usepackage{enumerate}
\usepackage{color}

\makeatletter
\@namedef{subjclassname@2010}{%
  \textup{2010} Mathematics Subject Classification}
\makeatother
\usepackage{amsthm,amsmath,amssymb,mathrsfs}
\frenchspacing

\textwidth=13.5cm
\textheight=23cm
\parindent=16pt
\oddsidemargin=-0.5cm
\evensidemargin=-0.5cm
\topmargin=-0.5cm

\theoremstyle{plain}
\newtheorem{theorem}{Theorem} [section]
\newtheorem{corollary}[theorem]{Corollary}
\newtheorem{lemma}[theorem]{Lemma}
\newtheorem{proposition}[theorem]{Proposition}

\theoremstyle{definition}
\newtheorem{definition}[theorem]{Definition}

\newtheorem{standing}[theorem]{Standing Assumptions}
\newtheorem{remark}[theorem]{Remark}

\newtheorem{example}[theorem]{Example}

\def\C{{\mathbb{C}}}
\def\N{{\mathbb{N}}}
\def\R{{\mathbb{R}}}
\def\Tor{{\mathbb{T}}}
\def\Z{{\mathbb{Z}}}

\def\A{{\mathcal{A}}}

\def\clspan{{\overline{\mathrm{span}}}}



\newcommand{\p}{\varphi}
\newcommand{\w}{\omega}



\newcommand{\g}{\gamma}
\newcommand{\la}{\lambda}
\newcommand{\dd}{\delta}
\newcommand{\G}{\Gamma}

\newcommand{\Dd}{\Delta}
\newcommand{\T}{\mathcal{T}}



\newcommand{\sn}{\overline{\text{span}}}


\newtheorem{thm*}{Teorema}[section]

\newcommand{\CHI}{\hbox{\raise .4ex \hbox{$\chi$}}}

\def\Span{{\text{\rm span}}}  
\def\subset{\subseteq}
\def\supp{{\textrm{supp}}}

\def\clspan{{\overline{\mathrm{span}}}}

\def\T{{\mathcal{T}}}


\begin{document}

\baselineskip=17pt

\title[Shift-modulation invariant spaces on LCA groups]{Shift-modulation invariant spaces on LCA groups}

\author[C. Cabrelli]{Carlos Cabrelli}
\address{\textrm{(C. Cabrelli)}
Departamento de Matem\'atica,
Facultad de Ciencias Exac\-tas y Naturales,
Universidad de Buenos Aires, Ciudad Universitaria, Pabell\'on I,
1428 Buenos Aires, Argentina and
IMAS-CONICET, Consejo Nacional de Investigaciones
Cient\'ificas y T\'ecnicas, Argentina}
\email{cabrelli@dm.uba.ar}

\author[V. Paternostro]{Victoria Paternostro}
\address{\textrm{(V.Paternostro)}
Departamento de Matem\'atica,
Facultad de Ciencias Exac\-tas y Naturales,
Universidad de Buenos Aires, Ciudad Universitaria, Pabell\'on I,
1428 Buenos Aires, Argentina and
IMAS-CONICET, Consejo Nacional de Investigaciones
Cient\'ificas y T\'ecnicas, Argentina}
\email{vpater@dm.uba.ar}

 \thanks{The research of
the authors is partially supported by
Grants: CONICET, PIP 398, UBACyT X638 and X502
}

\date{\today}

\begin{abstract}
A $(K,\Lambda)$ shift-modulation invariant space  is a subspace of $L^2(G)$, that is invariant by translations along elements in $K$ and modulations by elements in $\Lambda$. Here $G$ is a locally compact
abelian group, and $K$ and $\Lambda$ are closed subgroups of $G$ and  the dual group $\hat G$, respectively.

In this article we provide a characterization of
shift-modulation invariant spaces in this general context when $K$ and $\Lambda$ are uniform lattices. This extends previous results known for $L^2(\R^d)$.
We develop fiberization techniques  and suitable range functions
adapted to LCA groups  needed  to provide the  desired characterization.
\end{abstract}

\subjclass[2010]{Primary 43A77; Secondary 43A15}

\keywords{Shift-modulation invariant space, LCA groups, range functions, fibers}

\maketitle

\section{Introduction}
Shift-invariant spaces (SIS)  play an important role in approximation theory, wavelets and frames.
They have also proven to be very useful as models in  signal processing applications.

Shift-modulation invariant (SMI) spaces are  shift invariant spaces that have the extra condition to be  also
 invariant  under some group of modulations. These shift invariant spaces with the extra assumption of modulation invariance are of particular interest and are usually known as
Gabor or Weyl-Heisenberg spaces. They have
been intensively studied in \cite{Bow07}, \cite{CC01b}, \cite{CC01a}, \cite{Chr03}, \cite{Dau92}, \cite{GD04},  \cite{GD01}, \cite{Gro01}.

A very deep and detailed  study  of the structure of shift-modulation invariant spaces of $L^2(\R^d)$,  was
 given by Bownik (see \cite{Bow07}).
In that work, the author provides a characterization of SMI spaces based on fiberization techniques and
 range functions.

In the case of SIS,  the $L^2(\R^d)$ theory using range functions has been extended to the context of
locally compact abelian (LCA) groups  in \cite {CP10} and  \cite{KR08}.
This  general framework allows for a more complete view, in which relationships  among the  groups involved and its properties are more clearly exposed. Further, the LCA group setting  includes
the finite case. Having a valid theory for groups such as $\Z_n$ is important for applications.

Since modulations become  translations in the Fourier domain, shift-modu\-la\-tions invariant spaces are spaces that are
 shift-invariant in time and frequency.
As a consequence the techniques of shift-invariant spaces can be applied to study the structure of SMI spaces.
Having at hand  a theory of SIS on LCA groups, it is natural  to ask whether a
general theory of SMI spaces could be developed in this more general context.

In this article we define and study the structure of SMI spaces using range functions and fiberization techniques, in the context of LCA groups.
First we introduce the notion of shift-modulation spaces
where translations are on a closed
subgroup of an LCA group $G$ and modulations are on a closed subgroup of the dual group  $\widehat {G}$.
Next we focus our attention to the case where both, translations and modulations, are along uniform lattices of $G$ and  the dual group of $G$ respectively, with some minor hypotheses.
We prove a characterization  of shift-modulation invariant spaces,  extending  the result obtained by Bownik in \cite{Bow07} for the case of  $L^2(\R^d)$ to the case of LCA groups.
 The LCA setting of this article  allows to visualize the role played by the different components: the groups, their duals,
the quotiens, and the relationships between the different lattices involved and their annihilators. This role, is somehow hidden in the Euclidean  case.
For example in the classic case, the simple structure of the  lattices  allows to reduce
the study to the case when one of the lattices is $\Z^d$. Since the dual group of $\R^d$ is isomorphic to itself
and the annihilator of $\Z^d$ is isomorphic to  $\Z^d$, most of the analysis  can be done in the original group $\R^d$. 
The fact that in the general case we do not have these isomorphisms, requires some effort to discover the role of each component.
The lack of structure of the lattices involved creates additional difficulties in establishing the precise  setting.
This is particularly relevant in defining the different isomorphisms that lead to a suitable  decomposition
of $L^2(G)$ and consequently to a range function well adapted to spaces having this double invariance.
A diversity  of results are then required such as the existence of Borel sections and their relations (see for example equations (\ref{sec-delta}) and (\ref{sec-H})).

On the other side, once  the proper setting is  obtained, there is a more clear picture of the general interrelationships,
that have the additional advantage  to simplify part of treatment.

We have organized the article as follows. In  Section \ref{sec-2} we review some of the basic facts on LCA groups. Then we develop the notion of shift-modulation invariant spaces on this context and we set our standing assumptions. In Section \ref{sec-heuristic} we outline how the results on shift invariant spaces can be used for shift-modulation invariant spaces.  Section \ref{sec-fibras-rango} establishes the fiberization isometry and the concept of shift-modulation range functions. Section \ref{section-5} 
contains the main result of the paper, that is, the characterization of shift-modulation invariant spaces under uniform lattices. Finally, we include some examples in Section \ref{sec-example}.

\section{Preliminaries}\label{sec-2}

In this section we will set the known results and notation needed for the paper. We will also state our standing assumptions that will be in force for the remainder of this manuscript.

Let $G$ be an arbitrary  locally compact Hausdorff abelian  group  written additively.
We will denote  by $m_G$ its Haar measure. The dual group of $G$, that is, the set of continuous characters on $G$,  is denoted  by $\G$ or $\widehat{G}$. The value of the character $\g\in\G$ at the point $x\in G$ is written by $(x,\g)$.

When two LCA groups $G_1$ and $G_2$ are topologically isomorphic we will write  $G_1\approx G_2$.
In particular, it is known that $\widehat{\G}\approx G$, where $\widehat{\G}$ is the dual of the dual group of $G$.

For an LCA group $G$ and  $K\subset G$ a closed subgroup of $G$
the Haar measures $m_G$, $m_K$ and $m_{G/K}$  can be chosen such that  Weil's Formula
\begin{equation}
\int_G f(x)\,dm_G(x)=\int_{G/K} \int_K f(x+k)\,dm_K(k)\,dm_{G/K}([x]),
\end{equation}\label{wiel-formula}
holds for each $f\in L^1(G)$. Here $[x]$ denotes the coset of $x$ in the quotient $G/K$.
Given a subgroup $K$ of an LCA group $G$ we will indicate by $\Pi_{G/K}$ a  section for the quotient $G/K.$

The Fourier transform of a Haar integrable function $f$ on $G$ is the function $\widehat{f}$ on $\G$ defined by
$$\widehat{f}(\g)=\int_G f(x)(x,-\g)\,dm_G(x),\quad\g\in \G.$$
When the Haar measures $m_G$ and $m_{\G}$ are normalized such that the inversion formula holds (see \cite{Rud62}),
the Fourier transform on $L^1(G)\cap L^2(G)$ can be extended
to a unitary operator from $L^2(G)$ onto $L^2(\G)$, the so-called
Plancharel transformation. We also denote this transformation by ``$\land$".

If $K$ is a subgroup of $G$, the subgroup of $\G$
$$K^*=\{\gamma \in \G: (k,\g)=1, \,\,\forall\,\, k\in K\}$$
is called the {\it annihilator} of $K$. Since every character in $\G$ is continuous, $K^*$ is a closed subgroup of $\G$. When $K$ is a closed subgroup, it is true that $(K^*)^*\approx K$.
For every closed subgroup $K$ of $G$ the following duality relationships are valid:
\begin{equation}\label{rel-dualidad}
K^*\approx\widehat{(G/K)}\qquad\textrm{ and }\qquad\G/K^*\approx \widehat{K}.
\end{equation}

\begin{definition}
Given $G$ an LCA group, a  {\it uniform lattice $K$ in $G$} is a discrete subgroup of $G$ such that the quotient group $G/K$
is  compact.
\end{definition}
It is known that, for a uniform lattice $K$ in $G$,  there exists a measurable (Borel) section
of $G/K$ with finite $m_{G}$-measure (see \cite{KK98}
and \cite{FG64}).
Another important fact about uniform lattices is that if $K$ is a countable (finite or countably infinite) uniform lattice in $G$, then
$K^*$ is a countable uniform lattice in $\G$.

\begin{remark}\label{lattices-finito}
If $K_1\subset K_2$ are lattices in $G$, then $K_2/K_1$ is finite. To prove this,
observe that, since $K_2^*\subset K_1^*$, $\widehat{K_1^*}/\widehat{K_2^*}\approx K_2/K_1$ due to the duality relationships stated in (\ref{rel-dualidad}). Therefore, $K_2/K_1$ is both compact and discrete. Hence $K_2/K_1$ must be finite.
\end{remark}

Let $G$ be an  LCA group, $K$ a countable uniform lattice  in $G$ and $\Pi_{\G/K^*}\subseteq \G$ a $m_{\G}$-measurable section of the quotient $\Gamma/K^*$.
Throughout this article we will identify the space  $L^p(\Pi_{\G/K^*})$
with the set $\{\p\in L^p(\G): \p=0\,\, a.e.\,\,\, \G\setminus\Pi_{\G/K^*}\}$ for $p=1$ and $p=2$.

\begin{proposition}
Let $G$ be an LCA group and $K$ a countable uniform lattice in $G$.
Then, $\{\eta_k\}_{k\in K}$ is an orthogonal basis for $L^2(\Pi_{\G/K^*})$, where
$\eta_k(\g)=(k,-\g)\chi_{\Pi_{\G/K^*}}(\g)$.

Moreover, when $m_{\G}(\Pi_{\G/K^*})=1$,
$\{\eta_k\}_{k\in K}$ is an orthonormal basis for $L^2(\Pi_{\G/K^*})$.
\label{lema-bon}
\end{proposition}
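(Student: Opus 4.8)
The plan is to recognize $\{\eta_k\}_{k\in K}$, after transporting the problem to the quotient $\G/K^*$, as the complete family of characters of a compact abelian group, and then invoke the classical completeness of characters. First I would observe that since $k\in K$ and $K^*$ is the annihilator of $K$, the function $\g\mapsto(k,-\g)$ is constant on the cosets of $K^*$ and hence descends to a well-defined continuous function on $\G/K^*$. By the duality relationship $\G/K^*\approx\widehat K$ recorded in (\ref{rel-dualidad}), and because $K$ is discrete (so that $\widehat K$ is compact), these descended functions are precisely characters of the compact group $\widehat K$; moreover, as $k$ ranges over $K\approx\widehat{\widehat K}$ they run through the \emph{entire} dual of $\widehat K$. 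Using Weil's formula together with the fact that $\Pi_{\G/K^*}$ is a section of finite measure, the restriction map $\p\mapsto\p|_{\Pi_{\G/K^*}}$ identifies $L^2(\G/K^*)$ isometrically with $L^2(\Pi_{\G/K^*})$, carrying each character of $\widehat K\approx\G/K^*$ to the corresponding $\eta_k$.

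For the orthogonality I would simply compute $\ip{\eta_k}{\eta_{k'}}$ directly. Using $(x,-\g)=\overline{(x,\g)}$ and the bilinearity of the pairing, one obtains
$$\ip{\eta_k}{\eta_{k'}}=\int_{\Pi_{\G/K^*}}(k'-k,\g)\,dm_\G(\g).$$
When $k=k'$ the integrand is identically $1$, so $\norm{\eta_k}^2=m_\G(\Pi_{\G/K^*})$. When $k\neq k'$, the element $\beta=k'-k\in K$ is nonzero; since the characters of $\G$ separate points, $\g\mapsto(\beta,\g)$ is a nontrivial character which, being constant on $K^*$-cosets, is a nontrivial character of $\G/K^*$. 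Transporting the integral over the section $\Pi_{\G/K^*}$ to one over the compact quotient $\G/K^*$ (again via Weil's formula), it becomes the integral of a nontrivial character over a compact group, which vanishes by the usual translation argument. This yields orthogonality; and the value $\norm{\eta_k}^2=m_\G(\Pi_{\G/K^*})$ immediately gives the orthonormality claim once $m_\G(\Pi_{\G/K^*})=1$.

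The remaining and principal point is completeness, i.e.\ that $\clspan\{\eta_k\}_{k\in K}=L^2(\Pi_{\G/K^*})$. After the isometric identification of the first paragraph this is equivalent to the completeness of the full family of characters in $L^2(\G/K^*)$, where $\G/K^*\approx\widehat K$ is compact; this is exactly the classical theorem that the characters of a compact abelian group form an orthogonal basis of its $L^2$ space (a consequence of the Peter--Weyl theorem, or alternatively of the Stone--Weierstrass theorem applied to the algebra generated by the characters, combined with the density of $C(\G/K^*)$ in $L^2(\G/K^*)$). I expect the Haar-measure bookkeeping to be the only delicate part: one must fix the normalizations of $m_\G$, $m_{K^*}$ and $m_{\G/K^*}$ so that Weil's formula holds and restriction to $\Pi_{\G/K^*}$ is an isometry, and then verify that the characters of $\widehat K$ pull back to the functions $\eta_k$ themselves rather than to scalar multiples of them.
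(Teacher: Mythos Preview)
Your argument is correct and follows the natural route: identify $L^2(\Pi_{\G/K^*})$ with $L^2(\G/K^*)$ via the section, recognize the $\eta_k$ as the full dual of the compact group $\G/K^*\approx\widehat K$, compute orthogonality by direct integration, and obtain completeness from the Peter--Weyl/Stone--Weierstrass theorem. The measure bookkeeping you flag is indeed the only point requiring care, and you have handled it properly.

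There is nothing to compare against: the paper states this proposition without proof, treating it as known background (it is the standard fact that the exponentials indexed by $K$ form an orthogonal basis of $L^2$ on a fundamental domain for $K^*$, proved for instance in the authors' earlier paper \cite{CP10}). Your write-up is exactly the proof one would expect.
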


The following proposition will be needed later. Its proof can be found in \cite{CP10}.

\begin{proposition}
Let $G$ be an LCA group and  $K$ a countable uniform lattice
on  $G$.
Fix $\Pi_{\G/K^*}$ a Borel section of $\,\G/K^*$ and  choose
$m_{K^*}$ and $m_{\G/K^*}$ such that the inversion formula holds.
Then
$$\|a\|_{\ell^2(K)}=
\frac{m_K(\{0\})^{1/2}}{m_{\G}(\Pi_{\G/K^*})^{1/2}}\|\sum_{k\in K}a_k\eta_k\|_{L^2(\Pi_{\G/K^*})},$$
for each  $a=\{a_k\}_{k\in K}\in\ell^2(K)$.
\label{lema-parseval}
\end{proposition}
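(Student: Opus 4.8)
The plan is to obtain the identity as a direct consequence of the orthogonal-basis property supplied by Proposition~\ref{lema-bon}, after which everything reduces to tracking the two normalization constants $m_K(\{0\})$ and $m_{\G}(\Pi_{\G/K^*})$.

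First I would record the norm of a single $\eta_k$. Since $|(k,-\g)|=1$ for every $\g\in\G$, we have
$$\norm{\eta_k}_{L^2(\Pi_{\G/K^*})}^2=\int_{\Pi_{\G/K^*}}|(k,-\g)|^2\,dm_{\G}(\g)=m_{\G}(\Pi_{\G/K^*}),$$
independently of $k$. Setting $c:=m_{\G}(\Pi_{\G/K^*})$, Proposition~\ref{lema-bon} then says that $\{c^{-1/2}\eta_k\}_{k\in K}$ is an orthonormal basis of $L^2(\Pi_{\G/K^*})$.

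Next, for $a=\{a_k\}_{k\in K}\in\ell^2(K)$ I would write $\sum_{k\in K}a_k\eta_k=\sum_{k\in K}(c^{1/2}a_k)\,(c^{-1/2}\eta_k)$. Because $\sum_{k}|c^{1/2}a_k|^2=c\sum_k|a_k|^2<\infty$, the series converges in $L^2(\Pi_{\G/K^*})$ and Parseval's identity for the orthonormal basis $\{c^{-1/2}\eta_k\}$ yields
$$\Big\|\sum_{k\in K}a_k\eta_k\Big\|_{L^2(\Pi_{\G/K^*})}^2=c\sum_{k\in K}|a_k|^2.$$
This single step is the analytic heart of the argument, and it simultaneously disposes of the convergence of the (possibly infinite) sum.

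Finally I would relate $\sum_k|a_k|^2$ to the $\ell^2(K)$ norm. As $K$ is a countable uniform lattice it is discrete, so by translation invariance its Haar measure satisfies $m_K(\{k\})=m_K(\{0\})$ for all $k$; hence $\norm{a}_{\ell^2(K)}^2=\int_K|a|^2\,dm_K=m_K(\{0\})\sum_{k\in K}|a_k|^2$. Substituting $\sum_k|a_k|^2=c^{-1}\,\norm{\sum_k a_k\eta_k}^2$ and taking square roots gives precisely the asserted formula. The computation is short, and the only genuinely delicate point is the normalization bookkeeping: one must read $\ell^2(K)$ with respect to the Haar measure $m_K$ (so that the factor $m_K(\{0\})$ is the one fixed by the inversion-formula normalization) and confirm that the constant $c=m_{\G}(\Pi_{\G/K^*})$ produced by $\norm{\eta_k}^2$ is exactly the denominator in the statement. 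I expect this constant-tracking, rather than any hard estimate, to be the main obstacle.
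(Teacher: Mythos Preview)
Your argument is correct. The paper does not supply its own proof of this proposition (it simply cites \cite{CP10}), so there is no in-paper argument to compare against; your derivation---normalizing the orthogonal basis of Proposition~\ref{lema-bon}, applying Parseval, and then accounting for the factor $m_K(\{0\})$ in the Haar measure on $K$---is exactly the expected route and matches what one finds in the cited reference.
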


\begin{definition}\label{sm-def}
If $K\subseteq G$ and $\Lambda\subseteq \G$ are closed subgroups, 
we will say that a closed subspace  $V\subset L^2(G)$ is:
\begin{enumerate}
\item 
{\it $K$-shift invariant} (or
{\it shift invariant under $K$})
if
$$
f\in V \Rightarrow T_kf\in V\quad\forall\,\,k\in K,\,\quad \textrm{where} \quad T_kf(x)=f(x-k).
$$
\item
{\it $ \Lambda$-modulation invariant} (or
{\it modulation invariant under $\Lambda$})
if
$$
f\in V \Rightarrow M_{\la}f \in V\quad \forall \,\, \la \in \Lambda, \,\quad \textrm{where} \quad M_{\la}f(x)=(x,\la)f(x).
$$
\item
 {\it $(K, \Lambda)$--invariant} (or
{\it shift-modulation invariant under $(K, \Lambda)$})
if $V$ is shift invariant under $K$ and modulation invariant under $\Lambda$.
In that case
$$ f\in V \Rightarrow M_{\la}T_kf\in V\quad\forall\,\,k\in K\,\textrm{and}\,\la\in\Lambda.$$
\end{enumerate}
\end{definition}


For a subset $\mathcal{A}\subset L^2(G)$, define
$$E_{(K,\Lambda)}(\A)= \{M_{\la}T_k\p: \p\in\A, k\in K, \la\in\Lambda\}$$
and
$$S_{(K,\Lambda)}(\A)=\clspan \,E_{(K,\Lambda)}(\A).$$
A straightforward computation shows that the space  $S_{(K,\Lambda)}(\A)$ is shift-modulation  invariant under the pair $(K, \Lambda)$. We call $S_{(K,\Lambda)}(\A)$
the $(K, \Lambda)$-invariant space generated by $\A$.  Note that for every 
 $(K, \Lambda)$-invariant space $V$,
there exists  a countable set of generators $\mathcal{A}\subset L^2(G)$ such that
$V=S_{(K,\Lambda)}(\A)$. 

Similarly, $S_K(\A)$ will denote the closed subspace generated by 
translations  along $K$ of the elements of $\A$ and   $S_{\Lambda}(\A)$ the closed subspace generated by modulations from $\Lambda$ of the elements of $\A$.

In this paper we characterize  $(F,\Dd)$-invariant spaces for the case in which $F$ and $\Dd$  are uniform lattices
in $G$ and $\G$ respectively
and  $F\cap\Dd^*$ is a uniform lattice in $G$.
The condition on $F\cap\Dd^*$ of being a uniform lattice corresponds in the classic case $G=\R^d$ to the
rationally dependent lattices, see  \cite{Bow07}.
Similar to the shift invariant case (see \cite{CP10}), the  characterization of shift-modulation invariant spaces will be established in terms of appropriate range functions using fiberization techniques.

We  will set now our standing assumptions which will be in effect throughout the next sections.
\begin{standing}\label{standing}
\noindent
\begin{enumerate}
\item[$\bullet$] $G$ is a second countable LCA group and $\G$ its dual group.
\item[$\bullet$] $F$ is a countable uniform lattice on $G$. (translations)
\item[$\bullet$] $\Dd$ is a countable uniform lattice on $\G$. (modulations)
\item[$\bullet$] The sublattice $E:=F\cap\Dd^*$ is a (countable) uniform lattice on $G$.
\end{enumerate}
\end{standing}

As a consequence of our Standing Assumptions and Remark \ref{lattices-finito}
we obtain that:
\begin{enumerate}
\item [$(a)$]$E^*$ is a uniform lattice in $\G$ and $\Dd\subset E^*$.
\item [$(b)$]The quotient $E^*/\Dd$ is finite.
\item [$( c)$]$\Dd^*$ is a uniform lattice in $G$.
\end{enumerate}

Now we observe that if we  fix $\Pi_{\G/E^*}\subseteq \G$ a measurable
section for the quotient
$\G/E^*$ and
$\Pi_{E^*/\Dd^*}\subseteq E^*$ a finite section for $E^*/\Dd$,  then, we
can construct a measurable
section $\Pi_{\G/\Dd}$ for the quotient $\G/\Dd$ as
\begin{equation}
\Pi_{\Gamma/\Dd}=\bigcup_{e\,\in \,\Pi_{E^*/\Dd}}\Pi_{\G/E^*}+e.
\label{sec-delta}
\end{equation}
Let $\Pi_{F/E} \subseteq F$ be a finite section
for $F/E$. Note that, by the first isomorphism theorem for groups, $F/E$ is isomorphic to
$(F+\Dd^*)/\Dd^*$. Thus, $\Pi_{F/E} \subseteq F$ is also a section for $(F+\Dd^*)/\Dd^*$. Then, considering  $\Pi_{G/(F+\Dd^*)}$ a measurable section for $G/(F+\Dd^*)$, we
have that
\begin{equation}
\Pi_{G/\Dd^*}=\bigcup_{d\,\in\, \Pi_{F/E} }\Pi_{G/(F+\Dd^*)}-d
\label{sec-H}
\end{equation}
is a section for the quotient $G/\Dd^*$.
The minus sign in formula (\ref{sec-H}) is just for notational convenience in what follows.

These sections will be used to define the fiberization isometry and the range function.

In order to avoid carrying  over constants through the article, we will fix the following normalization of the Haar measures of the groups considered in this paper. This particular choice of the Haar measures does not affect the generality of our results.

First, we choose $m_{\Dd^*}$ such that $m_{\Dd^*}(\{0\})=1$. Then we fix $m_G$ and $m_{G/\Dd^*}$ such that the Weil's formula holds among $m_{\Dd^*}$,  $m_G$ and $m_{G/\Dd^*}$. Furthermore, we choose $m_{\G/E^*}$, $m_{E^*}$ in order to get
$m_{E^*}(\{0\})m_{\G/E^*}(\G/E^*)=\frac1{|\Pi_{E^*/\Dd^*}|}$ where by
$|\Pi_{E^*/\Dd^*}|$ we denote the cardinal of $\Pi_{E^*/\Dd^*}$. Then, we choose $m_{\G}$ such that  Weil's formula holds
among $m_{\G/E^*}$, $m_{E^*}$ and $m_{\G}$.

If $\Pi_{\G/\Dd}$ is given by (\ref{sec-delta}), this normalization implies
that $m_{\G}(\Pi_{\G/\Dd})=1$. This is  due to the formula $m_{\G}(\Pi_{\G/E^*})=m_{E^*}(\{0\})m_{\G/E^*}(\G/E^*)$ proved in \cite[Lemma 2.10]{CP10}.

In the paper we will use different instances of the following space.
\begin{definition}\label{hilbert-space}
Let $(X, \mu)$ be a finite measure space and $\mathcal{H}$ a separable Hilbert space with inner product $\langle\cdot,\cdot\rangle_{\mathcal{H}}$. We define $L^2(X,\mathcal{H})$ as the space of all measurable functions $\Phi:X\rightarrow\mathcal{H}$ such that
$$\|\Phi\|_2^2:=\int_{X}\|\Phi(x)\|_{\mathcal{H}}^2\,d\mu(x)<\infty,$$
where $\Phi:X\rightarrow\mathcal{H}$ is measurable if for each $v\in\mathcal{H}$ , the function
$x\mapsto\langle\Phi(x),v\rangle_{\mathcal{H}}$ from $X$ to $\C$ is measurable in the usual sense.
The space $L^2(X,\mathcal{H})$, with the inner product
$$\langle \Phi, \Psi\rangle:=\int_{X}\langle\Phi(x),\Psi(x)\rangle_{\mathcal{H}}\,d\mu(x)$$
is a complex Hilbert space.
\end{definition}

\section{Heuristic}\label{sec-heuristic}

Before we proceed to state the results and their proofs we will give in this section an informal discussions of the main ideas.
We start including  some results   from \cite{CP10} that we need.
We will only mention those that we require for our characterization.
For further details please refer to  \cite{CP10}.

Let $G$ be an LCA group, $H$ a uniform lattice in $G$ 
 and $\Pi_{\G/H^*}$ a measurable section for $\G/H^*$. Then, the spaces 
$L^2(G)$ and  $L^2(\Pi_{\Gamma/H^*},\ell^2({H^*})$ are isometrically isomorphic via the isomorphism:
\begin{equation}\label{iso2}
\T_{H}:L^2(G)\longrightarrow L^2(\Pi_{\Gamma/H^*},\ell^2({H^*})), \qquad \T_{H}f(\gamma)= \{\widehat{f}(\gamma+\delta)\}_{\delta \in H^*}
\end{equation}
The object $\{\widehat{f}(\g+\dd)\}_{\dd\in H^*}$ is called the $H^*$-{\it fiber} of $\widehat{f}$ 
at $\g$.

The isometry $ \T_H$ defined by (\ref{iso2}) is used to characterize by means of range functions, subspaces of  $L^2(G)$ that are shift invariant under $H$ :

\begin{definition}
A {\it shift range function} with respect to the pair $(G,H)$ is
a mapping
$$J:\Pi_{\G/H^*}\longrightarrow \{\textrm{closed subspaces of}\,\ell^2({H^*})\}.$$
The space $J(\gamma)$ is called the {\it fiber space}  associated to  $\gamma$.

\end{definition}

Then we have the characterization:
\begin{theorem}\cite[Theorem 3.10]{CP10}\label{SIS}
Let $V$ be a closed subspace of $L^2(G)$. Then $V$ is $H$-shift invariant if and only if there exists a measurable
shift range function $J$ such that
$$V=\big\{f\in L^2(G):\,\, \T_{H} f(\gamma)\in J(\gamma)\,\,\textrm{a.e.}\,\,\gamma\in \Pi_{\G/H^*}\}.$$
If $\A$ is a set of generators of $V$ as $H$-shift invariant  (i.e. $V=S_H(\A)$), then for a.e. $\g\in\Pi_{\G/H^*}$
$$J(\g)=\clspan\{\T_H\p(\g)\,:\, \p\in\A\}.$$ 
\end{theorem}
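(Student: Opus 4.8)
The plan is to transport the statement through the isometry $\T_H$ of (\ref{iso2}) and thereby convert $H$-shift invariance into invariance under a family of multiplication operators, to which an abstract range-function/decomposable-operator argument applies. The decisive first step is an intertwining identity. For $k\in H$ we have $\widehat{T_kf}(\g)=(k,-\g)\widehat f(\g)$, and since $(k,\dd)=1$ for every $\dd\in H^*$ by definition of the annihilator, the scalar $(k,-(\g+\dd))=(k,-\g)(k,-\dd)$ is independent of the fiber index $\dd$ and equals $(k,-\g)$. Consequently
$$\T_H(T_kf)(\g)=(k,-\g)\,\T_Hf(\g)\qquad(k\in H),$$
so $\T_H$ conjugates each translation $T_k$ to pointwise multiplication by the scalar character $\g\mapsto(k,-\g)$ on $L^2(\Pi_{\G/H^*},\ell^2(H^*))$. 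This is the algebraic core of the result: shift invariance in $G$ becomes invariance of $\widetilde V:=\T_H(V)$ under these multiplications. The backward implication is then immediate, since if $\T_Hf(\g)\in J(\g)$ a.e.\ and $J(\g)$ is a subspace, then $\T_H(T_kf)(\g)=(k,-\g)\T_Hf(\g)\in J(\g)$ a.e., whence $T_kf\in V$.

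For the forward implication I would first upgrade the invariance of $\widetilde V$ from the characters to all of $L^\infty(\Pi_{\G/H^*})$. By Proposition \ref{lema-bon} the characters $\{(k,-\cdot)\}_{k\in H}$ form (up to the cut-off $\chi_{\Pi_{\G/H^*}}$) an orthogonal basis of $L^2(\Pi_{\G/H^*})$, so their finite linear combinations are dense; this shows that the orthogonal projection $P$ onto $\widetilde V$ commutes with every $M_{(k,-\cdot)}$. Since the operators $\{M_{(k,-\cdot)}\}_{k\in H}$ generate, as a von Neumann algebra, the maximal abelian multiplication algebra $L^\infty(\Pi_{\G/H^*})$ acting on $L^2(\Pi_{\G/H^*},\ell^2(H^*))$, the commuting projection $P$ lies in its commutant and is therefore decomposable: by a measurable selection argument it is given a.e.\ by a fiberwise orthogonal projection $P(\g)$ on $\ell^2(H^*)$. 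Setting $J(\g):=\rg P(\g)$ yields a measurable shift range function with $\widetilde V=\{\Phi:\Phi(\g)\in J(\g)\ \text{a.e.}\}$, and pulling this back through $\T_H$ gives the asserted description of $V$.

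For the generator formula I would use that $\T_H$ is an isometric isomorphism and hence preserves closed spans, so $V=S_H(\A)=\clspan\{T_k\p:\p\in\A,\ k\in H\}$ yields $\widetilde V=\clspan\{(k,-\cdot)\,\T_H\p:\p\in\A,\ k\in H\}$. The inclusion $J(\g)\subseteq\clspan\{\T_H\p(\g):\p\in\A\}$ holds because each generator fiber $(k,-\g)\T_H\p(\g)$ lies in the right-hand side, while the reverse inclusion recovers each $\T_H\p(\g)$ from the family $\{(k,-\g)\T_H\p(\g)\}_{k\in H}$ using the fiberwise multiplication invariance.

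I expect the main obstacle to be precisely this interchange of the fiberwise closed span with the global closed span, together with the measurable selection underlying the decomposability of $P$: both require that the exceptional null sets be chosen simultaneously for a countable generating (resp.\ dense) family and then discarded. The LCA-specific difficulty is ensuring measurability of all these constructions relative to an abstract Borel section $\Pi_{\G/H^*}$ and an abstract countable lattice $H^*$, rather than a concrete fundamental domain in $\R^d$; here the second countability of $G$ and the cited existence of Borel sections of finite measure are exactly what make the selection arguments go through as in the Euclidean case.
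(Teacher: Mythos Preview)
The paper does not prove this theorem; it is quoted verbatim from \cite[Theorem 3.10]{CP10}. Your proposal is therefore being compared against the argument in \cite{CP10}, whose structure is mirrored in the present paper's proof of Theorem~\ref{thm-ppal}.

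Your approach is correct but genuinely different. You argue abstractly: the intertwining $\T_H T_k=M_{(k,-\cdot)}\T_H$ forces the projection onto $\T_H V$ to commute with the MASA $L^\infty(\Pi_{\G/H^*})$, hence to be decomposable, and $J(\g)$ is read off as the range of the fiber projection $P(\g)$. The proof in \cite{CP10} (and its analogue here for Theorem~\ref{thm-ppal}) is instead constructive and avoids direct-integral machinery: one \emph{defines} $J(\g)=\clspan\{\T_H\p(\g):\p\in\A\}$ from a countable generating set, shows $\T_H V\subseteq M_J$ by density and closedness of $M_J$, and obtains the reverse inclusion by an orthogonality argument---if $\Psi\in M_J\ominus\T_H V$ then $\langle\Psi,\T_H(T_k\p)\rangle=0$ for all $k,\p$, and expanding against the basis $\{\eta_k\}$ of Proposition~\ref{lema-bon} forces $\Psi(\g)\perp J(\g)$ a.e., hence $\Psi=0$. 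Your route is cleaner conceptually and yields measurability of $J$ for free from decomposability; the paper's route is more elementary (no von Neumann algebras, no measurable selection theorem) and produces the generator formula directly rather than as a separate step.

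One point in your write-up should be tightened: your argument for the generator formula conflates the fiberwise span with $J(\g)=\rg P(\g)$ without quite closing the loop. The clean way to finish is exactly the uniqueness step you allude to---define $J'(\g)=\clspan\{\T_H\p(\g):\p\in\A\}$, check $M_{J'}=\T_H V=M_J$ (this is the orthogonality argument above, or equivalently that $P(\g)\T_H\p(\g)=\T_H\p(\g)$ a.e.\ since $\p\in V$), and then invoke the lemma that two measurable range functions with the same $M_J$ agree a.e.\ (this is \cite[Lemma 3.11]{CP10}, restated here as Lemma~\ref{range-iguales-ae}). Your final paragraph correctly identifies this interchange as the crux, but you should name the uniqueness lemma explicitly rather than leaving it implicit.
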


Basically the result establishes that a function  $f \in L^2(G)$  belongs to $V$ if and only if each
fiber of the Fourier transform of $f$ is in the corresponding fiber space.

From Theorem \ref{SIS}, we can easily derive a similar result for {\it modulation} invariant spaces.
That is, if $\Lambda$ is a uniform lattice in $\Gamma$, and $W \subset L^2(G)$ is  $\Lambda$-modulation invariant, then
$\widehat{W},$ the image of $W$ under the Fourier transform is  $\Lambda$-{\emph{shift}} invariant   in $L^2(\Gamma)$.
Hence, fixing $ \Pi_{G/\Lambda^*}$a section for $G/\Lambda^*$ and using Theorem \ref{SIS},  we can derive the following characterization of modulation invariant spaces:
\begin{corollary}\label{MIS}
Let $W$ be a closed subspace of $L^2(G)$. Then $W$ is $\Lambda$-modulation invariant if and only if there exists a measurable
shift range function $J$ with respect to the pair $(\Gamma,\Lambda)$ such that
$$W=\big\{f\in L^2(G):\,\,  \widetilde{\T}_{\Lambda^*} f(x)\in J(x)\,\,\textrm{a.e.}\,\,x\in \Pi_{G/\Lambda^*}\},$$
where $\widetilde{\T}_{\Lambda^*}$ is the isomeric isomorphism 
\begin{equation}\label{iso1}
\widetilde{\T}_{\Lambda^*}:L^2(G)\longrightarrow L^2(\Pi_{G/\Lambda^*},\ell^2(\Lambda^*)), \qquad \widetilde{\T}_{\Lambda^*} f(x)= \{f(x+h)\}_{h\in \Lambda^*}.
\end{equation}
If $\A$ is a set of generators of $W$ as $\Lambda$-modulation invariant space  (i.e. 
$V=S_\Lambda(\A)$), then for a.e. $x\in\Pi_{G/\Lambda^*}$
$$J(x)=\clspan\{\widetilde{\T}_{\Lambda^*}\p(x)\,:\, \p\in\A\}.$$ 
\end{corollary}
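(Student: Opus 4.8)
The plan is to reduce the characterization of $\Lambda$-modulation invariant spaces directly to the already-established characterization of shift invariant spaces (Theorem \ref{SIS}) by conjugating with the Plancherel transform. The key observation is that modulation becomes shift after applying the Fourier transform: for $f \in L^2(G)$ and $\lambda \in \Lambda$, a direct computation gives $\widehat{M_\lambda f}(\gamma) = \widehat{f}(\gamma - \lambda)$, that is, $\widehat{M_\lambda f} = T_\lambda \widehat{f}$ where now $T_\lambda$ denotes translation on $L^2(\Gamma)$. First I would establish this intertwining relation $\widehat{\,\cdot\,} \circ M_\lambda = T_\lambda \circ \widehat{\,\cdot\,}$ carefully, so that modulation invariance of $W$ in $L^2(G)$ is \emph{equivalent} to shift invariance of its image $\widehat{W}$ in $L^2(\Gamma)$ under the lattice $\Lambda \subseteq \Gamma$.

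Second, I would apply Theorem \ref{SIS} with the group $\Gamma$ in place of $G$ and the uniform lattice $\Lambda$ in place of $H$. Here one must be careful about the dual-group bookkeeping: the theorem produces a shift range function on a section of $\widehat{\Gamma}/\Lambda^*$, and since $\widehat{\Gamma} \approx G$ with the annihilator of $\Lambda$ (computed inside $\widehat{\Gamma} \approx G$) being exactly $\Lambda^*$, the relevant section is a section of $G/\Lambda^*$, namely $\Pi_{G/\Lambda^*}$. The fiberization isometry supplied by Theorem \ref{SIS}, when transported through this identification $\widehat{\Gamma} \approx G$, is precisely the map $\T_\Lambda$ on $L^2(\Gamma)$ whose fibers are indexed by $\Lambda^*$. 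Thus Theorem \ref{SIS} yields a measurable range function $J$ on $\Pi_{G/\Lambda^*}$ with $\widehat{W} = \{ g \in L^2(\Gamma) : \T_\Lambda g(x) \in J(x) \text{ a.e. } x \in \Pi_{G/\Lambda^*} \}$.

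Third, I would unwind the composition. Writing $f \in W$ iff $\widehat{f} \in \widehat{W}$, the condition becomes $\T_\Lambda \widehat{f}(x) \in J(x)$. It remains to verify that the composite operator $f \mapsto \T_\Lambda \widehat{f}$ is exactly the stated isometry $\widetilde{\T}_{\Lambda^*} : L^2(G) \to L^2(\Pi_{G/\Lambda^*}, \ell^2(\Lambda^*))$ with $\widetilde{\T}_{\Lambda^*} f(x) = \{f(x+h)\}_{h \in \Lambda^*}$. This is the crux: the fiber of $\widehat{\widehat{f}}$ at $x$ under $\T_\Lambda$ is $\{\,\widehat{\widehat{f}}(x+h)\,\}_{h \in \Lambda^*}$, and using the Fourier inversion identification $\widehat{\widehat{f}}(x) = f(-x)$ (or the canonical identification of $\widehat{\Gamma}$ with $G$ under which the double Fourier transform becomes the identity on $L^2(G)$), these sample values collapse to $\{f(x+h)\}_{h \in \Lambda^*}$, matching $\widetilde{\T}_{\Lambda^*}$. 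The generator formula follows identically: since $\widehat{S_\Lambda(\A)} = S_\Lambda(\widehat{\A})$ as a shift invariant space, the generator clause of Theorem \ref{SIS} gives $J(x) = \clspan\{\T_\Lambda \widehat{\p}(x) : \p \in \A\} = \clspan\{\widetilde{\T}_{\Lambda^*}\p(x) : \p \in \A\}$.

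The step I expect to be the main obstacle is the careful tracking of the double-dual identification $\widehat{\Gamma} \approx G$ in the second and third steps, ensuring that the annihilator conventions, the indexing set $\Lambda^*$, and the sign conventions in the characters are all consistent so that $\T_\Lambda \circ \widehat{\,\cdot\,}$ genuinely equals $\widetilde{\T}_{\Lambda^*}$ rather than some reflected or twisted variant. The measurability of $J$ and its well-definedness transfer for free from Theorem \ref{SIS}, since the Plancherel transform is a unitary isomorphism; the only genuine content is confirming that the two fiberization maps agree on the nose, which is a routine but notation-heavy verification via Weil's formula and Fourier inversion.
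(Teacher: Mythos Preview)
Your proposal is correct and follows exactly the approach the paper indicates: conjugate by the Plancherel transform so that $\Lambda$-modulation invariance of $W$ becomes $\Lambda$-shift invariance of $\widehat{W}$ in $L^2(\Gamma)$, then invoke Theorem~\ref{SIS} with $(\Gamma,\Lambda)$ in place of $(G,H)$ and unwind the identifications $\widehat{\Gamma}\approx G$ to recover $\widetilde{\T}_{\Lambda^*}$. The paper treats this corollary as immediate and supplies no further argument beyond the sentence preceding its statement, so your careful tracking of the double-dual and sign conventions simply fills in details the authors left to the reader.
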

That is, a function $f$ belongs to $W$ if and only if its fibers (an not the fibers of its  Fourier transform) belong to corresponding fiber space.

 Now we will describe how we will apply these results to shift-modulation invariant spaces.

Let $G$ be an LCA group and $\Gamma$  its dual, and let $F$ and $\Delta$ be uniform lattices in  $G$ and $\Gamma$ respectively satisfying Standing Assumptions (\ref{standing}).

Let $V$ be a $(F,\Delta)$-shift-modulation invariant space in $L^2(G)$, (see Definition \ref{sm-def}).
We have now two ways to characterize $V$, one using the invariance under translations and the other using the invariance under modulations.
Assume we choose the characterization using that our space  is  $\Delta$-modulation invariant. Then,
by Corollary \ref{MIS}, we have
a range function defined in a section $\Pi_{G/\Delta^*}$, with fiber spaces $J(x) \subset \ell^2(\Delta^*)$.

Next we will show that the invariance under translations along the lattice  $E =F\cap\Dd^*$ implies
that the fiber spaces $J(x)$ are $E$-shift invariant in $\ell^2(\Delta^*)$. So using again Theorem \ref{SIS},
we obtain a range function for each of the fiber spaces $J(x)$.
From there we construct a {\it shift-modulation} range function that will produce the desired characterization.

Note that we did not consider in this description  translations by elements of $F$ that are not in $E$.
As we will see later, the action of these elements produces some periodicity on the range function.

\section{The Fiberization Isometry and  Range Functions}\label{sec-fibras-rango}

The goal of this section is to define the fiberization isometry  and a suitable range function required to achieve the characterization of $(F,\Dd)$-invariant spaces. We start defining  the  isometry that will produce a decomposition of the space $L^2(G)$, and we show its relation with the Zak transform in Section \ref{isometry-zak}. In Section \ref{sec-range-function} we will first introduce the concept of shift-modulation range function. Then we  associate a shift-modulation invariant space to this range function and also  construct a range function from a given shift-modulation invariant space.

\subsection{ The Isometry}\label{isometry-zak}

Let us fix $F\subseteq G$ and $\Dd\subseteq \G$ countable uniform lattices verifying the Standing Assumptions
(\ref{standing}).

In order to construct the fiberization isometry, we must introduce  the following isomorphisms.

Let $\widetilde{\T}_{\Dd^*}:L^2(G)\longrightarrow L^2(\Pi_{G/\Dd^*},\ell^2(\Dd^*))$ be the isometric isomorphism  defined as in (\ref{iso1}) for $G$ and $\Dd^*$. That is,
\begin{equation}
\widetilde{\T}_{\Dd^*} f(x)=\{f(x+h)\}_{h\in \Dd^*}.
\end{equation}\label{tau-H}

On the other hand, consider
$\T_E:\ell^2(\Dd^*)\longrightarrow L^2(\Pi_{\G/E^*},\ell^2(\Pi_{E^*/\Dd}))$ defined by
\begin{equation}\label{tau-E}
\T_Ea(\xi)=\{\sum_{h\in \Dd^*}a_h\eta_h(\xi+e)\}_{e\in \Pi_{E^*/\Dd}},
\end{equation}
where the functions $\eta_h$ are as in (\ref{lema-bon}) and $a=\{a_h\}_{h\in \Dd^*}$.

\begin{lemma}
The map $\T_E$ defined in (\ref{tau-E}) is an isometric isomorphism between  $\ell^2(\Dd^*)$ and  $L^2(\Pi_{\G/E^*},\ell^2(\Pi_{E^*/\Dd}))$.
\end{lemma}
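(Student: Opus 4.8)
The plan is to verify that $\T_E$ is well-defined, linear, isometric, and surjective. Linearity is immediate from the formula, so the essential content is the isometry identity, after which surjectivity will follow from a standard density or orthonormal-basis argument. I would begin by recalling from Proposition \ref{lema-bon} that $\{\eta_h\}_{h\in \Dd^*}$ is an orthogonal basis for $L^2(\Pi_{\G/\Dd})$, and by our normalization of Haar measures $m_{\G}(\Pi_{\G/\Dd})=1$, so in fact it is an \emph{orthonormal} basis. The key observation is that the section $\Pi_{\G/\Dd}$ decomposes as in (\ref{sec-delta}) via the finite section $\Pi_{E^*/\Dd}$, namely $\Pi_{\G/\Dd}=\bigcup_{e\in\Pi_{E^*/\Dd}}(\Pi_{\G/E^*}+e)$, a disjoint union. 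This decomposition is precisely what allows the single index $\xi\in\Pi_{\G/E^*}$ together with the finite index $e\in\Pi_{E^*/\Dd}$ to parametrize the whole section $\Pi_{\G/\Dd}$.

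The core computation I would carry out is the following. For $a=\{a_h\}_{h\in\Dd^*}\in\ell^2(\Dd^*)$, set $F_a=\sum_{h\in\Dd^*}a_h\eta_h\in L^2(\Pi_{\G/\Dd})$. By Parseval (Proposition \ref{lema-parseval}) applied to $\Dd^*$, together with the normalization $m_{\Dd^*}(\{0\})=1$ and $m_{\G}(\Pi_{\G/\Dd})=1$, one gets $\|a\|_{\ell^2(\Dd^*)}=\|F_a\|_{L^2(\Pi_{\G/\Dd})}$. Then I would split the integral defining $\|F_a\|^2$ over the disjoint pieces $\Pi_{\G/E^*}+e$:
\begin{equation*}
\|F_a\|_{L^2(\Pi_{\G/\Dd})}^2=\sum_{e\in\Pi_{E^*/\Dd}}\int_{\Pi_{\G/E^*}+e}|F_a(\g)|^2\,dm_{\G}(\g)
=\sum_{e\in\Pi_{E^*/\Dd}}\int_{\Pi_{\G/E^*}}|F_a(\xi+e)|^2\,dm_{\G}(\xi),
\end{equation*}
using translation invariance of $m_{\G}$ in the last step. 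Recognizing $F_a(\xi+e)=\sum_{h\in\Dd^*}a_h\eta_h(\xi+e)$ as exactly the $e$-th component of $\T_Ea(\xi)$, the right-hand side is precisely $\int_{\Pi_{\G/E^*}}\|\T_Ea(\xi)\|_{\ell^2(\Pi_{E^*/\Dd})}^2\,dm_{\G/E^*}(\xi)=\|\T_Ea\|_2^2$, which establishes the isometry $\|a\|_{\ell^2(\Dd^*)}=\|\T_Ea\|_2$.

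For surjectivity, I would argue that the range of $\T_E$ is closed (being the isometric image of the complete space $\ell^2(\Dd^*)$) and dense in $L^2(\Pi_{\G/E^*},\ell^2(\Pi_{E^*/\Dd}))$. Density can be seen by checking that the image contains a spanning set: since $\{\eta_h\}$ is a basis of $L^2(\Pi_{\G/\Dd})$ and this space is unitarily identified with $L^2(\Pi_{\G/E^*},\ell^2(\Pi_{E^*/\Dd}))$ through the very decomposition above, the images $\T_E(e_h)$ of the standard basis vectors $e_h\in\ell^2(\Dd^*)$ correspond to the $\eta_h$ and hence span a dense subspace. I expect the main obstacle to be bookkeeping rather than conceptual: one must be careful that the measure on $\Pi_{\G/E^*}$ used in the target space $L^2(\Pi_{\G/E^*},\ell^2(\Pi_{E^*/\Dd}))$ is the restriction of $m_{\G}$ (equivalently $m_{\G/E^*}$ under the section identification), and that the disjointness and translation-invariance used to split the integral are applied consistently with the normalizations fixed in the Standing Assumptions. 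The finiteness of $\Pi_{E^*/\Dd}$ guarantees that the inner sum over $e$ is finite, so no convergence subtlety arises there; all the analytic weight sits in the $\ell^2(\Dd^*)$ index, handled cleanly by Parseval.
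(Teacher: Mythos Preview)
Your proposal is correct and follows essentially the same route as the paper: both establish the isometry by combining Proposition~\ref{lema-parseval} (with the normalization $m_{\Dd^*}(\{0\})=m_{\G}(\Pi_{\G/\Dd})=1$) and the decomposition $\Pi_{\G/\Dd}=\bigcup_{e\in\Pi_{E^*/\Dd}}(\Pi_{\G/E^*}+e)$ to identify $\|\T_Ea\|_2^2$ with $\|\sum_h a_h\eta_h\|_{L^2(\Pi_{\G/\Dd})}^2=\|a\|_{\ell^2(\Dd^*)}^2$; you run this chain in the opposite order from the paper, but it is the same computation. For surjectivity the paper is slightly more direct: given $\Phi\in L^2(\Pi_{\G/E^*},\ell^2(\Pi_{E^*/\Dd}))$ it defines $\widetilde\Phi\in L^2(\Pi_{\G/\Dd})$ by $\widetilde\Phi(\xi+e)=(\Phi(\xi))_e$, expands $\widetilde\Phi$ in the orthonormal basis $\{\eta_h\}$, and reads off the preimage $a$; your closed-plus-dense argument uses the same identification implicitly and is equally valid.
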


\begin{proof}
Since $\Pi_{E^*/\Dd}$ is an index set, according to Definition (\ref{hilbert-space}) we have that
\begin{eqnarray*}
\|\T_Ea\|^2_2&=&\int_{\Pi_{\G/E^*}}\sum_{e\in \Pi_{E^*/\Dd}}|\sum_{h\in \Dd^*}a_h\eta_h(\xi+e)|^2\,dm_{\G}(\xi)\\
&=&\int_{\Pi_{\G/\Dd}}|\sum_{h\in \Dd^*}a_h\eta_h(\w)|^2\,dm_{\G}(\w)\\
&=&\|\sum_{h\in \Dd^*}a_h\eta_h\|^2_{L^2(\Pi_{\G/\Dd})}.
\end{eqnarray*}

Now, applying Proposition \ref{lema-parseval} we obtain
$$\|\sum_{h\in \Dd^*}a_h\eta_h\|^2_{L^2(\Pi_{\G/\Dd})}=\frac{m_{\G}(\Pi_{\G/\Dd})}{m_{\Dd^*}(\{0\})}\|a\|^2_{\ell^2(\Dd^*)}.$$

Hence, by our normalization of the Haar measures,   $\frac{m_{\G}(\Pi_{\G/\Dd})}{m_{\Dd^*}(\{0\})}=1$ and then
$\|\T_Ea\|^2_2=\|a\|^2_{\ell^2(\Dd^*)}.$

Let $\Phi\in L^2(\Pi_{\G/E^*},\ell^2(\Pi_{E^*/\Dd}))$. Then $\Phi$ induces the function $\widetilde{\Phi}\in L^2(\Pi_{\G/\Dd})$
given by
$$\widetilde{\Phi}(\w)=\big(\Phi(\xi)\big)_e,$$
where $\w=\xi+e\in\Pi_{\G/\Dd}$, with $\xi\in \Pi_{\G/E^*}$ and $e\in \Pi_{E^*/\Dd}$. Here $\big(\Phi(\xi)\big)_e$ denotes the value of the sequence $\Phi(\xi)$ at $e$.
It is easy to check that $\|\Phi\|_2=\|\widetilde{\Phi}\|_{L^2(\Pi_{\G/\Dd})}$.

By Proposition \ref{lema-bon}, $\{\eta_h\}_{h\in \Dd^*}$ is  an orthonormal basis for $L^2(\Pi_{\G/\Dd})$.
Thus, $\widetilde{\Phi}=\sum_{h\in \Dd^*}a_h\eta_h$ for some $a=\{a_h\}_{h\in \Dd^*}\in\ell^2(\Dd^*)$. From this, it follows
that $\T_E a=\Phi$.
Therefore, $\T_E$ is an isomorphism.
\end{proof}

\begin{remark}\label{remark}
Note that $E^{*_{(\Dd^*)}}$, the annihilator of $E$ as a subgroup of $\Dd^*$, is
topologically isomorphic to $E^*/\Dd$. Hence, using the dual relationship stated in (\ref{rel-dualidad}), it follows that $\widehat{\Dd^*}/E^{*_{(\Dd^*)}}\approx \G/E^*$.
This allows us to look at $\T_E$ as  a particular case of the map defined in (\ref{iso2}).
\end{remark}

The isometric isomorphism $\T_E$ induces another isometric isomorphism
$$\Psi_1: L^2(\Pi_{G/\Dd^*},\ell^2(\Dd^*))\longrightarrow L^2(\Pi_{G/\Dd^*},L^2(\Pi_{\G/E^*},\ell^2(\Pi_{E^*/\Dd})))$$
defined by
$$\Psi_1(\phi)(x)=\T_E(\phi(x)).$$

In addition, we can identify the Hilbert space $L^2(\Pi_{G/\Dd^*},L^2(\Pi_{\G/E^*},\ell^2(\Pi_{E^*/\Dd})))$ with
$L^2(\Pi_{G/\Dd^*}\times \Pi_{\G/E^*},\ell^2(\Pi_{E^*/\Dd})))$ using the isometric isomorphism
$$ \Psi_2: L^2(\Pi_{G/\Dd^*},L^2(\Pi_{\G/E^*},\ell^2(\Pi_{E^*/\Dd})))\longrightarrow L^2(\Pi_{G/\Dd^*}\times \Pi_{\G/E^*},\ell^2(\Pi_{E^*/\Dd}))
$$
given by
$$\Psi_2(\phi)(x,\xi)=\phi(x)(\xi).$$

\begin{definition}\label{def-tau}
We define
$\T: L^2(G)\longrightarrow L^2(\Pi_{G/\Dd^*}\times \Pi_{\G/E^*},\ell^2(\Pi_{E^*/\Dd}))$
as
$$\T=\Psi_2\circ\Psi_1\circ\widetilde{\T}_{\Dd^*}.$$
\end{definition}

This mapping $\T$, which is actually an isometric isomorphism, and which we call  {\it  the fiberization isometry}, can be explicitly defined as
\begin{equation}
\T f(x,\xi)=\T_E(\widetilde{\T}_{\Dd^*}f(x))(\xi)=\{\sum_{h\in \Dd^*}f(x-h)(h,\xi+e)\}_{e\in \Pi_{E^*/\Dd}}.
\end{equation}

\subsubsection{The isometry and the Zak transform}\noindent

As it is well known, a natural tool to study shift-modulation invariant spaces is the Zak transform. 
The Zak transform was first introduced in $\R$ by Gelfand \cite{Gel50}. Then, Weil \cite{wei64} extended this transform to general LCA groups, and independently Zak \cite{Zak67} used it in physical problems. In what follows we show how the Zak transform is present in our analysis. 

We recall  the  usual Zak transform $Z:L^2(G)\to \mathcal{Q}$ given by
$$Zf(x,\xi)=\sum_{h\in \Dd^*}f(x-h)(h,\xi),$$
where $\mathcal{Q}$ is the set of all measurable functions $F:G\times \G\to \C$ satisfying
\begin{enumerate}
\item [$(a)$]$F(x+h,\xi)=(h,\xi)F(x,\xi) \,\,\forall\, h\in \Dd^*$,
\item [$(b)$]$F(x,\xi+\dd)=F(x,\xi)\,\, \forall\,\dd\in\Dd$  and
\item [$(c)$]$\|F\|^2=\int_{\Pi_{\G/\Dd}}\int_{\Pi_{G/\Dd^*}}|F(x,\xi)|^2\,dm_G(x)\,dm_{\G}(\xi)<\infty$.
\end{enumerate}

Then, it is clear that
$$\T f(x,\xi)=\{Zf(x,\xi+e)\}_{e\in \Pi_{E^*/\Dd}}.$$

The next lemma states an important property about $\T$ which  will be useful in what follows.
Its proof is a straightforward consequence of properties $(a)$, $(b)$ and $(c)$ formulated above.
\begin{lemma}\label{tau-propiedad}
For each $f\in L^2(G)$ the map $\T$ of Definition \ref{def-tau} satisfies
$$\T(M_{\dd}T_y f)(x,\xi)=(x,\dd)(-z,\xi)\T(T_d f)(x,\xi)\quad\textrm{ a.e. }(x,\xi)\in \Pi_{G/\Dd^*}\times \Pi_{\G/E^*} ,$$
where $\dd\in\Dd$, $y\in F$ and $y=z+d$ with $z\in E$ and $d\in \Pi_{F/E}$.
\end{lemma}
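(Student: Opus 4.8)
The plan is to verify the claimed transformation formula by computing $\T(M_{\dd}T_y f)$ directly from the explicit fiber formula for $\T$ and reducing it, via the Zak transform properties, to an expression involving $\T(T_d f)$. First I would write $y = z + d$ with $z \in E$ and $d \in \Pi_{F/E}$, so that $T_y f = T_z T_d f$, and observe that since $E = F \cap \Dd^*$ we have $z \in \Dd^*$. The key structural fact is that the isometry decomposes through the Zak transform as $\T f(x,\xi) = \{Zf(x,\xi+e)\}_{e \in \Pi_{E^*/\Dd}}$, so it suffices to track how $Z$ transforms under the operators $M_\dd$, $T_z$, and $T_d$, and then evaluate at the shifted arguments $\xi + e$.

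The central computation rests on three elementary properties of the Zak transform $Z$. First, $Z$ intertwines modulation by $\dd \in \Dd$ with multiplication by the character $(x,\dd)$, which follows from the quasi-periodicity property $(b)$: since $\dd \in \Dd$, shifting the second variable by $\dd$ leaves $Zf$ invariant, and a short calculation gives $Z(M_\dd f)(x,\xi) = (x,\dd)\, Zf(x,\xi)$. Second, $Z$ intertwines translation by $z \in \Dd^*$ with multiplication by a character in the second variable; here property $(a)$ is exactly the statement $Z(T_h f)(x,\xi) = (h,\xi)^{-1} Zf(x,\xi)$ for $h \in \Dd^*$, which for our $z \in E \subseteq \Dd^*$ yields a factor $(z,\xi)^{-1} = (-z,\xi)$. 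Third, translation by $d$ is simply absorbed into $T_d f$ and produces no extra factor. Combining these, $Z(M_\dd T_y f)(x,\xi) = (x,\dd)(-z,\xi)\, Z(T_d f)(x,\xi)$.

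It then remains to evaluate this identity at $\xi + e$ for each $e \in \Pi_{E^*/\Dd}$ and check that the character factors are unchanged. The factor $(x,\dd)$ does not depend on $\xi$, so it survives untouched. For the factor $(-z,\xi)$ I must verify that $(-z,\xi+e) = (-z,\xi)$ for all $e \in \Pi_{E^*/\Dd} \subseteq E^*$; this is exactly where the choice $z \in E$ is used, since $e \in E^*$ annihilates every element of $E$, giving $(z,e) = 1$ and hence $(-z,\xi+e) = (-z,\xi)(-z,e) = (-z,\xi)$. Thus the character in front of each fiber coordinate is the same constant $(x,\dd)(-z,\xi)$, and it factors out of the whole sequence $\{Z(T_d f)(x,\xi+e)\}_e = \T(T_d f)(x,\xi)$, yielding the stated formula.

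I expect the main obstacle to be bookkeeping rather than conceptual: one must be careful that $z$ indeed lies in $\Dd^*$ (so that property $(a)$ applies and produces the clean character factor) and simultaneously in $E^{**} $-annihilated by $E^*$ (so that the factor is constant along the fiber index $e$), both of which hinge on $z \in E = F \cap \Dd^*$. The decomposition $y = z + d$ and the interplay between the lattices $F$, $E$, and $\Dd^*$ must be handled with some care, but once the membership $z \in E$ is exploited on both ends, the computation is the routine consequence of properties $(a)$, $(b)$, and $(c)$ that the paper advertises.
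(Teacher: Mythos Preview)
Your proposal is correct and follows exactly the route the paper indicates: the lemma is stated as a straightforward consequence of the Zak transform quasi-periodicity properties $(a)$ and $(b)$, and you carry this out carefully, computing $Z(M_\dd T_y f)(x,\xi) = (x,\dd)(-z,\xi)\,Z(T_d f)(x,\xi)$ via a change of summation index and then using $z \in E$ together with $e \in E^*$ to see that the factor $(-z,\xi+e)$ is independent of $e$. One minor remark: property $(c)$ is the $L^2$-integrability condition and plays no role in the algebraic identity itself, so your final sentence slightly overstates its relevance, but this is harmless.
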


\subsection{Shift-modulation Range Functions}\label{sec-range-function}

In this section we will introduce the notion of  shift-modulation range function adapted to the defined isometry.
\begin{definition}
A {\it shift-modulation range function} with respect to the pair $(F,\Dd)$ is
a mapping
$$J:\Pi_{G/\Dd^*}\times\Pi_{\G/E^*}\longrightarrow \{\textrm{{\ subspaces of }}\,\ell^2(\Pi_{E^*/\Dd})\},$$
satisfying the following periodicity property:
\begin{equation}\label{J-periodica}
J(x,\xi)=J(x-d,\xi)\quad\forall\,\, d\in \Pi_{F/E}\;
\text{ and a.e. } (x, \xi)\in \Pi_{G/(F+\Dd^*)}\times\Pi_{\G/E^*}.
\end{equation}

\end{definition}
For a shift-modulation range function $J$, we associate  to each $(x,\xi)\in \Pi_{G/\Dd^*}\times\Pi_{\G/E^*}$
the orthogonal projection onto
$J(x,\xi)$, $P_{(x,\xi)}:  \ell^2(\Pi_{E^*/\Dd})\to J(x,\xi)$.

We say that a shift-modulation range function $J$ is measurable if the function
$(x,\xi)\mapsto P_{(x,\xi)}$ from $\Pi_{G/\Dd^*}\times\Pi_{\G/E^*}$ to $\ell^2(\Pi_{E^*/\Dd})$ is measurable.

For a shift-modulation range function $J$ (not necessarily measurable) we define the subset $M_J$ as
\begin{align}\label{m-j}
M_J=&\{\Psi\in  L^2(\Pi_{G/\Dd^*}\times \Pi_{\G/E^*},\ell^2(\Pi_{E^*/\Dd}))\\
&:\Psi(x,\xi)\in J(x,\xi), \textrm{a.e.}(x,\xi)\in \Pi_{G/\Dd^*}\times\Pi_{\G/E^*}\}.\nonumber\\
\nonumber
\end{align}

\begin{remark}\label{m-j-cerrado}
The subspace $M_J$ defined above is a closed subspace in $L^2(\Pi_{G/\Dd^*}\times \Pi_{\G/E^*},\ell^2(\Pi_{E^*/\Dd}))$. For the proof of this fact see \cite[Lemma 3.8]{CP10}.
\end{remark}

\subsubsection{The shift-modulation invariant space associated to a range function}\noindent

The following proposition states that if $J$ is a  given shift-modulation range function with respect
to the pair $(F,\Dd)$, we can associate to $J$
an $(F,\Dd)$-invariant space.

\begin{proposition}\label{prop-M-J-SMIS}
Let $J$ be a shift-modulation range function and define $V:=\T^{-1}M_J$, where $M_J$ is  as in
 (\ref{m-j}) and $\T$ is the fiberization isometry.

Then,
$V$ is an $(F,\Dd)$-invariant space in $L^2(G)$.
\end{proposition}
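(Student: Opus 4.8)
The plan is to exploit that $\T$ is an isometric isomorphism and that $M_J$ is closed, so that all the work reduces to verifying the defining invariances fiberwise inside $J$. First I would record that $V=\T^{-1}M_J$ is a closed subspace of $L^2(G)$: by Remark \ref{m-j-cerrado} the set $M_J$ is a closed subspace of $L^2(\Pi_{G/\Dd^*}\times\Pi_{\G/E^*},\ell^2(\Pi_{E^*/\Dd}))$, and $\T$ is an isometric isomorphism, so its inverse image is a closed subspace. By Definition \ref{sm-def}(3), it then remains to show that $M_\dd T_y f\in V$ for every $f\in V$, $\dd\in\Dd$ and $y\in F$; equivalently, that $\T(M_\dd T_y f)(x,\xi)\in J(x,\xi)$ a.e. whenever $\T f(x,\xi)\in J(x,\xi)$ a.e.

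The next step is to strip off everything that acts as a mere phase. Writing $y=z+d$ with $z\in E$ and $d\in\Pi_{F/E}$, Lemma \ref{tau-propiedad} gives
$$\T(M_\dd T_y f)(x,\xi)=(x,\dd)(-z,\xi)\,\T(T_d f)(x,\xi)\quad\text{a.e.}$$
Since each $J(x,\xi)$ is a subspace of $\ell^2(\Pi_{E^*/\Dd})$ and $(x,\dd)(-z,\xi)$ is a scalar of modulus one, the inclusion $M_\dd T_y f\in V$ will follow once I establish $\T(T_d f)(x,\xi)\in J(x,\xi)$ a.e. for the pure lattice-coset translations $d\in\Pi_{F/E}$. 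This is the genuine content of the statement, and I expect it to be the main obstacle: it is precisely where the periodicity condition (\ref{J-periodica}) must be used, and it requires careful bookkeeping with the sections (\ref{sec-H}).

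For that crux I would compute $\T(T_d f)$ through the Zak transform, using $Z(T_df)(x,\xi)=Zf(x-d,\xi)$ and $\T(T_df)(x,\xi)=\{Zf(x-d,\xi+e)\}_{e\in\Pi_{E^*/\Dd}}$. Given $x\in\Pi_{G/\Dd^*}$, I write $x=w-d'$ with $w\in\Pi_{G/(F+\Dd^*)}$ and $d'\in\Pi_{F/E}$ via (\ref{sec-H}), and reduce $d'+d=z_0+d''$ with $z_0\in E\subseteq\Dd^*$ and $d''\in\Pi_{F/E}$, so that $x-d=(w-d'')-z_0$ with $w-d''\in\Pi_{G/\Dd^*}$. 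Property $(a)$ of $\mathcal{Q}$ (quasi-periodicity of the Zak transform) then yields $Zf(x-d,\xi+e)=(-z_0,\xi+e)\,Zf(w-d'',\xi+e)$, and since $z_0\in E$ while $e\in\Pi_{E^*/\Dd}\subseteq E^*$ we get $(-z_0,\xi+e)=(-z_0,\xi)$, independent of $e$. Hence $\T(T_d f)(x,\xi)=(-z_0,\xi)\,\T f(w-d'',\xi)$. Finally, the periodicity (\ref{J-periodica}) gives $J(w-d'',\xi)=J(w,\xi)=J(w-d',\xi)=J(x,\xi)$, and since $f\in V$ forces $\T f(w-d'',\xi)\in J(w-d'',\xi)$, multiplication by the scalar $(-z_0,\xi)$ keeps the fiber in the subspace $J(x,\xi)$. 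This establishes $\T(T_d f)(x,\xi)\in J(x,\xi)$ a.e., completing the reduction above and showing that $V$ is $(F,\Dd)$-invariant.
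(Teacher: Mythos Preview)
Your proof is correct. Both you and the paper reduce via Lemma \ref{tau-propiedad} to showing $\T(T_d f)(x,\xi)\in J(x,\xi)$ a.e., and both ultimately rely on the periodicity (\ref{J-periodica}); the difference is organizational. The paper first establishes the inclusion only for $x\in\Pi_{G/(F+\Dd^*)}$ (where $x-d$ already lies in $\Pi_{G/\Dd^*}$, so $\T(T_d f)(x,\xi)=\T f(x-d,\xi)$ without any adjustment), and then extends to all of $\Pi_{G/\Dd^*}$ by a bootstrap: writing $x=x'-d'$ with $x'\in\Pi_{G/(F+\Dd^*)}$ and using the commutation $T_{d'}M_\dd T_y=(-d',\dd)M_\dd T_{d'+y}$ to reduce back to the already-proved case on the small section with $y$ replaced by $d'+y$. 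Your route instead handles all $x\in\Pi_{G/\Dd^*}$ in a single pass via an explicit Zak-transform computation, absorbing the leftover $z_0\in E$ through the quasi-periodicity property $(a)$ and the fact that $(-z_0,e)=1$ for $e\in E^*$. Your argument is slightly more computational but avoids the two-stage structure; the paper's is a bit more structural but needs the bootstrap. The content is the same.
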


\begin{proof}
To begin with, observe that,  since $\T$ is an isometry, $V\subset L^2(G)$
is a closed subspace, by Remark \ref{m-j-cerrado}.

Let $f\in V$, $\dd\in\Dd$ and $y\in F$. We need to show that $M_{\dd}T_y f\in V$.

According to Lemma \ref{tau-propiedad}, we have that
$$\T(M_{\dd}T_y f)(x,\xi)=(x,\dd)(-z,\xi)\T(T_d f)(x,\xi)\quad\textrm{ a.e. }(x,\xi)\in \Pi_{G/\Dd^*}\times \Pi_{\G/E^*},$$
where $y=z+d$ with $z\in E$ and $d\in \Pi_{F/E}$.

In particular, if $x\in \Pi_{G/(F+\Dd^*)}$ we can rewrite $\T(T_d f)(x,\xi)$ as $\T f(x-d,\xi)$. Then, since
$\T f\in M_J$ and $J$ satisfies (\ref{J-periodica}),we have
$$\T(T_d f)(x,\xi)=\T f(x-d,\xi)\in J(x-d,\xi)=J(x,\xi),$$
for a.e. $(x,\xi)\in \Pi_{G/(F+\Dd^*)}\times \Pi_{\G/E^*}$.
Thus,
\begin{equation}\label{ecu-vale-en-F+H}
\T(M_{\dd}T_y f)(x,\xi)\in J(x,\xi)\quad\textrm{ a.e. }(x,\xi)\in \Pi_{G/(F+\Dd^*)}\times \Pi_{\G/E^*},
\end{equation}
and this is valid for all $y\in F$ and $\dd\in \Dd$.

We now want to show that (\ref{ecu-vale-en-F+H}) holds on $\Pi_{G/\Dd^*}\times \Pi_{\G/E^*}$.
Let $(x,\xi)\in \Pi_{G/\Dd^*}\times \Pi_{\G/E^*}$. By (\ref{sec-H}) we can set $x=x'-d$ with $x'\in \Pi_{G/(F+\Dd^*)}$
and $d\in \Pi_{F/E}$.
If we fix $\dd\in\Dd$ and $y\in F$, then
$\T(M_{\dd}T_y f)(x,\xi)=\T(T_dM_{\dd}T_y f)(x',\xi)$.
Since $M_{\la}T_k g=(k,\la)T_k M_{\la} g$ for all $g\in L^2(G)$, $\la\in \Dd$ and $k\in F$, we have
$$\T(T_dM_{\dd}T_y f)(x',\xi)=(-d,\dd)\T(M_{\dd}T_{d+y} f)(x',\xi)\in J(x',\xi)=J(x,\xi).$$
Then, (\ref{ecu-vale-en-F+H}) holds on $\Pi_{G/\Dd^*}\times \Pi_{\G/E^*}$. Therefore, $M_{\dd}T_y f\in V$ for all
$\dd\in\Dd$ and $y\in F$.
\end{proof}

\subsubsection{The range function associated to a $(F, \Dd)$-invariant space}
\noindent\label{construccion-J}

Theorem \ref{SIS} gives   a specific way to describe the shift range function associated to each shift invariant space in terms of the fibers of its  generators. The analogous description is given in Corollary \ref{MIS} for modulation invariant spaces. Now  we will use these results  to construct a shift-modulation range function from a given $(F, \Dd)$-invariant space. 

Assume that $V \subseteq L^2(G) $ is an $(F,\Dd)$-invariant space and that  $V=S_{(F,\Dd)}(\A)$
for some  countable set $\A\subseteq L^2(G)$. We will show, how to associate  to $V$
a shift-modulation range function.

Since $V$ is $\Dd$-modulation invariant, by Corollary \ref{MIS}, $V$ can be described as
\begin{equation}
V=\big\{f\in L^2(G):\,\, \widetilde{\T}_{\Dd^*} f(x)\in J_{\Dd^*}(x)\,\,\textrm{a.e.}\,\,x\in \Pi_{G/\Dd^*}\big\},
\label{V-H-inv}
\end{equation}
where $\widetilde{\T}_{\Dd^*}$ is the isometry defined in (\ref{tau-H}) and $J_{\Dd^*} $ is the  shift range function associated to $V$ given by
$$J_{\Dd^*}:\Pi_{G/\Dd^*} \longrightarrow \{\textrm{{closed subspaces of} } \ell^2(\Dd^*)\}$$
$$J_{\Dd^*}(x)=\sn\{\widetilde{\T}_{\Dd^*}(T_y\p)(x)\,:\,y\in F, \,\p\in\A\}.$$
Now, let us see that $J_{\Dd^*}(x)\subset \ell^2(\Dd^*)$ is a shift invariant space under translations in $E$.
Since  $\Pi_{F/E}\subseteq F$ is a section for the quotient $F/E$, every $y\in F$ can be written
in a unique way as $y=z+d$ with $z\in E$ and $d\in \Pi_{F/E}$. Then,   using that
$\widetilde{\T}_{\Dd^*} T_zf=T_z\widetilde{\T}_{\Dd^*}f$
for all $z\in E$, we can rewrite $J_{\Dd^*}(x)$ as
$$J_{\Dd^*}(x)=\sn\{T_z\widetilde{\T}_{\Dd^*}(T_d\p)(x)\,:\,z\in E,\, d\in \Pi_{F/E}, \,\p\in\A\}.$$
This description  shows that
 $J_{\Dd^*}(x)$ is  a shift invariant space under translations in $E$ generated
by the set $\{\widetilde{\T}_{\Dd^*}(T_d\p)(x)\,:\,d\in \Pi_{F/E}, \,\p\in\A\}$.

Using  Theorem \ref{SIS}, we can characterize $J_{\Dd^*}(x)$ for a.e $x\in \Pi_{G/\Dd^*}$ as follows.
For each $x\in \Pi_{G/\Dd^*}\setminus Z$, where $Z$ is the exceptional zero $m_G$-measure set,  there exists a range function $J_E^x:\Pi_{\G/E^*}\longrightarrow \{\textrm{{ subspaces of }}\,\ell^2(\Pi_{E^*/\Dd})\}$ such that
$$J_{\Dd^*}(x)=\big\{a\in \ell^2(\Dd^*):\,\, \T_E a(\xi)\in J_E^x(\xi)\,\,\textrm{a.e.}\,\,\xi\in \Pi_{\G/E^*}\big\},$$
where $\T_E$ is the map given in (\ref{tau-E}).

Moreover,
\begin{eqnarray*}
J_E^x(\xi)&=&\sn\{\T_E(\widetilde{\T}_{\Dd^*}T_d\p(x))(\xi)\,:\,d\in \Pi_{F/E}, \,\p\in\A\}\\
&=&\sn\{\T (T_d\p)(x, \xi)\,:\,d\in \Pi_{F/E}, \,\p\in\A\}\\
&=&\Span\{\T (T_d\p)(x, \xi)\,:\,d\in \Pi_{F/E}, \,\p\in\A\},
\end{eqnarray*}
where in the last equality we use that $\dim (\ell^2(\Pi_{E^*/\Dd}))<\infty$.

This leads to the function $J:\Pi_{G/\Dd^*}\times\Pi_{\G/E^*}\to \{\textrm{{subspaces of }}\,\ell^2(\Pi_{E^*/\Dd})\}$ defined as
\begin{equation}\label{range-function}
J(x,\xi)=\Span\{\T (T_d\p)(x, \xi)\,:\,d\in \Pi_{F/E}, \,\p\in\A\},
\end{equation}
for a.e. $(x,\xi)\in \Pi_{G/\Dd^*}\times\Pi_{\G/E^*}$.

\begin{lemma}\label{pinta-J}
Let $\A\subseteq L^2(G)$ a countable set.
Then, the map defined in (\ref{range-function}) is a shift-modulation range function.
\end{lemma}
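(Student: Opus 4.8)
The plan is to check the two requirements in the definition of a shift-modulation range function: that each value $J(x,\xi)$ is a subspace of $\ell^2(\Pi_{E^*/\Dd})$, and that $J$ satisfies the periodicity property (\ref{J-periodica}). The first requirement is immediate, since by the defining formula (\ref{range-function}) each $J(x,\xi)$ is the linear span of a family of vectors of $\ell^2(\Pi_{E^*/\Dd})$, hence a subspace (in fact closed, as the ambient space is finite-dimensional, which is exactly why $\Span$ rather than $\sn$ appears in (\ref{range-function})). No measurability is required at this stage, since that is a separate property in the definition. Thus all the content lies in the periodicity.

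To establish (\ref{J-periodica}), I would fix $d_0\in\Pi_{F/E}$ and a point $(x,\xi)$ with $x\in\Pi_{G/(F+\Dd^*)}$, and compare the spans defining $J(x,\xi)$ and $J(x-d_0,\xi)$. The first step is the elementary shift identity $\T(T_d\p)(x-d_0,\xi)=\T(T_{d+d_0}\p)(x,\xi)$, which follows directly from the explicit formula for $\T$ recorded after Definition \ref{def-tau}: both sides equal $\{\sum_{h\in\Dd^*}\p(x-d_0-d-h)(h,\xi+e)\}_{e\in\Pi_{E^*/\Dd}}$. This rewrites $J(x-d_0,\xi)$ as the span of $\{\T(T_{d+d_0}\p)(x,\xi):d\in\Pi_{F/E},\,\p\in\A\}$.

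The key step is then to push each translation parameter $d+d_0\in F$ back into the section $\Pi_{F/E}$. Writing the unique decomposition $d+d_0=z'+d'$ with $z'\in E$ and $d'\in\Pi_{F/E}$, I would invoke Lemma \ref{tau-propiedad} with trivial modulation $\dd=0$ (so that $(x,0)=1$), which gives $\T(T_{d+d_0}\p)(x,\xi)=(-z',\xi)\,\T(T_{d'}\p)(x,\xi)$. The scalar $(-z',\xi)$ is a character value, hence unimodular and in particular nonzero, so multiplying a generator by it does not alter the span. Moreover, as $d$ runs over $\Pi_{F/E}$ the representative $d'$ runs bijectively over $\Pi_{F/E}$, since $d\mapsto d'$ is exactly the translation $\bar d\mapsto\overline{d+d_0}$ of the finite group $F/E$ read through the section. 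Consequently the families $\{(-z',\xi)\T(T_{d'}\p)(x,\xi)\}$ and $\{\T(T_d\p)(x,\xi)\}$ span the same subspace, yielding $J(x-d_0,\xi)=J(x,\xi)$ for a.e.\ such $(x,\xi)$, as Lemma \ref{tau-propiedad} holds a.e.

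I expect the only delicate point to be this re-indexing: one must verify that $\{d+d_0:d\in\Pi_{F/E}\}$ meets each coset of $E$ in $F$ that $\Pi_{F/E}$ represents exactly once, so that after absorbing the $E$-shifts into nonzero scalars one recovers precisely the generating family of $J(x,\xi)$. This is guaranteed by the finiteness of $F/E$ (Remark \ref{lattices-finito}) and the fact that $\Pi_{F/E}$ is a section for $F/E$. Everything else is bookkeeping with the explicit formula for $\T$ and the group-law identity already isolated in Lemma \ref{tau-propiedad}.
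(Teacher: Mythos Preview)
Your proof is correct and follows essentially the same approach as the paper's: both reduce to the shift identity $\T(T_d\p)(x-d_0,\xi)=\T(T_{d+d_0}\p)(x,\xi)$, decompose $d+d_0=z'+d'$ with $z'\in E$ and $d'\in\Pi_{F/E}$, and invoke Lemma \ref{tau-propiedad} to absorb the $E$-shift into a unimodular scalar. The only cosmetic difference is that the paper shows $J(x-d_0,\xi)\subseteq J(x,\xi)$ and then appeals to symmetry for the reverse inclusion, whereas you obtain equality directly via the bijection $d\mapsto d'$ on $\Pi_{F/E}$; both are equally valid.
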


\begin{proof}
We need to show that $J$ satisfies property (\ref{J-periodica}).

Let $d_0\in \Pi_{F/E}$.
For each $d\in \Pi_{F/E}$,  we have that  $\T(T_d\p)(x-d_0,\xi)=\T(T_{d+d_0}\p)(x,\xi)$ for a.e.
$(x,\xi)\in \Pi_{G/(F+\Dd^*)}\times \Pi_{\G/E^*}$.
Since $d+d_0\in F$,  it can be written as $d+d_0=d'+z'$ with $d'\in \Pi_{F/E}$ and
$z'\in E$. Then,  according to Lemma \ref{tau-propiedad},
$\T(T_{d+d_0}\p)(x,\xi)=(z',\xi)\T(T_{d'}\p)(x,\xi)$. Thus,
$\T(T_d\p)(x-d_0,\xi)\in J(x,\xi)$  due to $\T(T_{d'}\p)(x,\xi)\in J(x,\xi)$.
This shows that $J(x-d_0,\xi)\subseteq J(x,\xi)$ for a.e.
$(x,\xi)\in \Pi_{G/(F+\Dd^*)}\times \Pi_{\G/E^*}$ for each $d_0\in \Pi_{F/E}$.

With an analogous argument, it can be proven that
$J(x,\xi)\subseteq J(x-d_0,\xi)$ for a.e.
$(x,\xi)\in \Pi_{G/(F+\Dd^*)}\times \Pi_{\G/E^*}$ for each $d_0\in \Pi_{F/E}$.
\end{proof}

As we have seen in Proposition \ref{prop-M-J-SMIS}, each shift-modulation range function with respect to the pair $(F, \Dd)$ induces
an $(F,\Dd)$-invariant space. Furthermore, in Section \ref{construccion-J} we
associated to each shift-modulation invariant space $V$ a shift-modulation range function
from a  system of generators of $V$.
This leads to a natural question. If $V$ is an $(F,\Dd)$-invariant space and
$J$ the shift-modulation range function that $V$ induces, what is the relationship between $V$ and the
$(F,\Dd)$-invariant space induced from $J$?

That will be the content of the following section.
\section{The characterization of $(F,\Dd)$-Invariant Spaces}\label{section-5}

We can now state our main result which characterizes  $(F,\Dd)$-invariant spaces in terms of the fiberization isometry and shift-modulation range functions.

\begin{theorem}\label{thm-ppal}
Let $V\subset L^2(G)$ be a closed subspace and $\T$ the fiberization isometry of Definition \ref{def-tau}.
Then,
$V$ is an $(F,\Dd)$-invariant  space if and only if there exists a measurable shift-modulation range function
$J:\Pi_{G/\Dd^*}\times\Pi_{\G/E^*}\longrightarrow \{\textrm{{subspaces of }}\,\ell^2(\Pi_{E^*/\Dd})\}$ such that
$$V=\big\{f\in L^2(G):\,\, \T f(x,\xi)\in J(x,\xi)\,\,\textrm{a.e.}\,\,(x,\xi)\in \Pi_{G/\Dd^*}\times \Pi_{\G/E^*}\big\}.$$

Identifying shift-modulation  range functions which are equal almost everywhere, the correspondence between
 $(F,\Dd)$-invariant spaces and  measurable shift-modulation range functions is one to one and onto.

Moreover, if $V=S_{(F,\Dd)}(\A)\subseteq L^2(G)$ for some countable subset $\A$ of $L^2(G)$, the measurable shift-modulation range function $J$
associated to $V$ is given by
$$J(x,\xi)=\Span\{\T T_d\p(x, \xi)\,:\,d\in \Pi_{F/E}, \,\p\in\A\},$$
a.e. $(x, \xi)\in \Pi_{G/\Dd^*}\times\Pi_{\G/E^*}$.
\end{theorem}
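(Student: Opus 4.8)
The plan is to prove only the forward implication, since the converse is exactly Proposition~\ref{prop-M-J-SMIS}, and then to read off the bijection from the uniqueness built into the two underlying characterizations. The whole argument rests on the factorization $\T=\Psi_2\circ\Psi_1\circ\widetilde{\T}_{\Dd^*}$, which lets me handle the two invariances one after the other. Since every $(F,\Dd)$-invariant space admits a countable generating set, I may assume $V=S_{(F,\Dd)}(\A)$.

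First I would use that $V$ is $\Dd$-modulation invariant and invoke Corollary~\ref{MIS}: it gives a measurable shift range function $J_{\Dd^*}:\Pi_{G/\Dd^*}\to\{\text{closed subspaces of }\ell^2(\Dd^*)\}$ with $V=\{f:\widetilde{\T}_{\Dd^*}f(x)\in J_{\Dd^*}(x)\ \text{a.e.}\}$, and the explicit fibers $J_{\Dd^*}(x)=\sn\{\widetilde{\T}_{\Dd^*}(T_y\p)(x):y\in F,\ \p\in\A\}$. Next I would show each fiber is $E$-shift invariant in $\ell^2(\Dd^*)$: using the intertwining $\widetilde{\T}_{\Dd^*}T_z=T_z\widetilde{\T}_{\Dd^*}$ for $z\in E\subseteq\Dd^*$ together with the unique decomposition $y=z+d$, $z\in E$, $d\in\Pi_{F/E}$, one rewrites $J_{\Dd^*}(x)$ as the $E$-shift-invariant span of $\{\widetilde{\T}_{\Dd^*}(T_d\p)(x)\}$. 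Applying Theorem~\ref{SIS} to the pair $(\Dd^*,E)$, whose fiberization isometry is precisely $\T_E$ by Remark~\ref{remark}, then produces, for $x$ off a null set, a range function $J^x_E$ with $J_{\Dd^*}(x)=\{a:\T_E a(\xi)\in J^x_E(\xi)\ \text{a.e.}\}$ and $J^x_E(\xi)=\Span\{\T(T_d\p)(x,\xi):d\in\Pi_{F/E},\ \p\in\A\}$, the closure being superfluous since $\ell^2(\Pi_{E^*/\Dd})$ is finite dimensional. Setting $J(x,\xi):=J^x_E(\xi)$ yields the formula in the statement, and Lemma~\ref{pinta-J} certifies it is a genuine shift-modulation range function. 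Chaining the two equivalences through $\T f(x,\xi)=\T_E(\widetilde{\T}_{\Dd^*}f(x))(\xi)$ gives $f\in V$ iff $\T f(x,\xi)\in J(x,\xi)$ for a.e. $x$ and then a.e. $\xi$, which I would upgrade to ``a.e. $(x,\xi)$'' by a Fubini argument on the product section.

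The hard part will be measurability: because $J$ is assembled fiber-by-fiber in $x$ and each use of Theorem~\ref{SIS} discards an $x$-dependent null set, I must verify that $(x,\xi)\mapsto P_{(x,\xi)}$ is jointly measurable, which is also what legitimizes the Fubini step above. The route I expect to take exploits the countable family $\A$ and formula~(\ref{range-function}): each $(x,\xi)\mapsto\T(T_d\p)(x,\xi)$ is a measurable vector field in $L^2(\Pi_{G/\Dd^*}\times\Pi_{\G/E^*},\ell^2(\Pi_{E^*/\Dd}))$, and $J(x,\xi)$ is the span of countably many of these, so a measurable Gram--Schmidt selection (as used for the analogous fact in \cite{CP10}) delivers measurable projections; this also re-confirms through Remark~\ref{m-j-cerrado} that $M_J$ is closed.

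Finally, for the one-to-one and onto claim I would argue that $J\mapsto\T^{-1}M_J$ and $V\mapsto J$ are mutually inverse up to a.e. equality. The characterization just proved says every $(F,\Dd)$-invariant $V$ equals $\T^{-1}M_J$ for its associated $J$, giving surjectivity; for injectivity, if two measurable range functions $J_1,J_2$ yield the same space, then $M_{J_1}=M_{J_2}$ because $\T$ is an isometric isomorphism, and $M_J$ determines $J$ almost everywhere by the uniqueness clauses of Theorem~\ref{SIS} and Corollary~\ref{MIS} applied successively at the modulation and shift levels. This closes the correspondence.
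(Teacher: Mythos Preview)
Your argument is correct and follows precisely the heuristic the paper itself sketches in Section~\ref{sec-heuristic} and~\S\ref{construccion-J}, but the paper's actual proof of Theorem~\ref{thm-ppal} takes a more direct route on the two substantive points. For the identity $\T V=M_J$, rather than chaining Corollary~\ref{MIS} and Theorem~\ref{SIS} through the factorization of $\T$ and then appealing to Fubini, the paper argues straight from the definition: the inclusion $\T V\subseteq M_J$ comes from Lemma~\ref{tau-propiedad}, and equality is forced by an orthogonality argument in which any $\Psi\in M_J$ orthogonal to $\T V$ has $\langle\Psi(x,\xi),\T(T_d\p)(x,\xi)\rangle$ with vanishing coefficients against the orthogonal basis $\nu_{(\dd,z)}(x,\xi)=\eta_\dd(x)\eta_{-z}(\xi)$ of $L^2(\Pi_{G/\Dd^*}\times\Pi_{\G/E^*})$, hence $\Psi(x,\xi)\in J(x,\xi)^\perp\cap J(x,\xi)=\{0\}$. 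For measurability the paper bypasses Gram--Schmidt: once $\T V=M_J$ is known, one shows that the global projection $\mathcal P$ onto $M_J$ satisfies $(\mathcal P\Psi)(x,\xi)=P_{(x,\xi)}(\Psi(x,\xi))$, so taking $\Psi\equiv a$ constant exhibits $(x,\xi)\mapsto P_{(x,\xi)}a$ as measurable. Finally, your injectivity step (``uniqueness clauses of Theorem~\ref{SIS} and Corollary~\ref{MIS} applied successively'') is morally right but really needs the general range-function uniqueness for multiplication-invariant subspaces, which the paper isolates as Lemma~\ref{range-iguales-ae}. Your two-step approach is more conceptual and reuses the SIS/MIS machinery as black boxes; the paper's direct argument is shorter, sidesteps the iterated null-set bookkeeping, and delivers measurability essentially for free once $\T V=M_J$ is in hand.
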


For the proof of Theorem \ref{thm-ppal}
we need the following   lemma, which is an  adaptation  of
\cite[Lemma 3.11]{CP10}.
For its  proof see \cite{CP10}.

\begin{lemma}\label{range-iguales-ae}
If $J$ and $J'$ are two measurable shift-modulation range functions such that $M_J=M_{J'}$,
where $M_J$ and $M_{J'}$ are given by (\ref{m-j}),
then $J(x,\xi)=J'(x,\xi)$ a.e. $(x,\xi)\in \Pi_{G/\Dd^*}\times \Pi_{\G/E^*}$.
That is, $J$ and $J'$ are equal almost everywhere.
\end{lemma}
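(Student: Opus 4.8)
The plan is to show that the hypothesis $M_J = M_{J'}$ forces the associated orthogonal projections $P_{(x,\xi)}$ and $P'_{(x,\xi)}$ to agree almost everywhere, from which the equality $J(x,\xi)=J'(x,\xi)$ a.e.\ is immediate (a closed subspace is determined by its orthogonal projection). Throughout, write $X=\Pi_{G/\Dd^*}\times\Pi_{\G/E^*}$ and $\mathcal{H}=\ell^2(\Pi_{E^*/\Dd})$, so that $M_J$ and $M_{J'}$ are closed subspaces of $L^2(X,\mathcal{H})$ by Remark \ref{m-j-cerrado}, and note that $\mathcal{H}$ is finite-dimensional since $\Pi_{E^*/\Dd}$ is finite.

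The crux of the argument is the pointwise description of the orthogonal projection onto $M_J$. I would define the operator $Q$ on $L^2(X,\mathcal{H})$ by $(Q\Psi)(x,\xi)=P_{(x,\xi)}\Psi(x,\xi)$ and verify that $Q$ is exactly the orthogonal projection of $L^2(X,\mathcal{H})$ onto $M_J$. Measurability of $Q\Psi$ follows from measurability of the range function $J$: for each fixed $v\in\mathcal{H}$ the map $(x,\xi)\mapsto P_{(x,\xi)}v$ is measurable, so $Q\Psi$ is measurable when $\Psi$ is simple, and hence for all $\Psi\in L^2(X,\mathcal{H})$ by approximation. The pointwise contraction estimate $\|P_{(x,\xi)}\Psi(x,\xi)\|_{\mathcal{H}}\le\|\Psi(x,\xi)\|_{\mathcal{H}}$ shows $Q\Psi\in L^2(X,\mathcal{H})$; since each $P_{(x,\xi)}$ is an orthogonal projection on $\mathcal{H}$, $Q$ is pointwise idempotent and self-adjoint, hence an orthogonal projection. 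Its range is exactly $M_J$, because $Q\Psi=\Psi$ holds precisely when $\Psi(x,\xi)\in J(x,\xi)$ a.e.

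The identical construction for $J'$ produces the orthogonal projection $Q'$ onto $M_{J'}$, with $(Q'\Psi)(x,\xi)=P'_{(x,\xi)}\Psi(x,\xi)$. Since $M_J=M_{J'}$ by hypothesis, the orthogonal projections onto these subspaces coincide, so $Q=Q'$ as operators on $L^2(X,\mathcal{H})$. Consequently, for every $\Psi\in L^2(X,\mathcal{H})$ we have $P_{(x,\xi)}\Psi(x,\xi)=P'_{(x,\xi)}\Psi(x,\xi)$ for a.e.\ $(x,\xi)$.

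Finally I would test this identity against constant functions. Let $\{v_1,\dots,v_N\}$ be the canonical orthonormal basis of the finite-dimensional space $\mathcal{H}$. Since $X$ has finite measure, each constant function $\Psi_j\equiv v_j$ belongs to $L^2(X,\mathcal{H})$, so for each $j$ there is a null set off which $P_{(x,\xi)}v_j=P'_{(x,\xi)}v_j$. Removing the finite union of these null sets yields a single full-measure subset of $X$ on which $P_{(x,\xi)}$ and $P'_{(x,\xi)}$ agree on the entire basis, whence $P_{(x,\xi)}=P'_{(x,\xi)}$ and therefore $J(x,\xi)=J'(x,\xi)$ there. The main obstacle is the first step, namely verifying that the pointwise operator $Q$ genuinely is the orthogonal projection onto $M_J$ (in particular the measurability of $(x,\xi)\mapsto P_{(x,\xi)}\Psi(x,\xi)$); once this pointwise form is established, the passage to equality of the range functions is routine.
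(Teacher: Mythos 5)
Your proposal is correct and takes essentially the same route as the paper's proof (which is deferred to \cite[Lemma 3.11]{CP10} and mirrors the argument the paper itself runs inside the proof of Theorem \ref{thm-ppal}): show that for a measurable range function the orthogonal projection onto $M_J$ acts pointwise as $\Psi(x,\xi)\mapsto P_{(x,\xi)}\Psi(x,\xi)$, conclude $Q=Q'$ from $M_J=M_{J'}$, and evaluate on constant functions. The only (harmless) difference is that you exploit the finite dimensionality of $\ell^2(\Pi_{E^*/\Dd})$ to test against a finite orthonormal basis, where the cited argument uses a countable dense subset of the fiber space.
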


\begin{proof}[Proof of Theorem 4.1]
If  $V$ is an $(F,\Dd)$-invariant space, then, since $L^2(G)$ is separable, we have that
$ V=S_{(F,\Dd)}(\A)$ for some countable subset $\A$ of $L^2(G)$.

Let us consider the function $J$
defined as
$$J(x,\xi)=\Span\{\T (T_d\p)(x, \xi)\,:\,d\in \Pi_{F/E}, \,\p\in\A\}$$
 defined on $ \Pi_{G/\Dd^*}\times\Pi_{\G/E^*}$ and taking values in $\{\textrm{{ subspaces of }}\,\ell^2(\Pi_{E^*/\Dd})\}$.

 As a consequence of Lemma \ref{pinta-J}, $J$ is a shift-modulation range function. We must prove that
$\T V=M_J$ where $M_J$ is as in (\ref{m-j}) and that $J$ is measurable.

We will first show  $\T V=M_J$.

Take $\dd\in\Dd$, $y\in F$ written as $y=z+d$ with $z\in E$ and $d\in \Pi_{F/E}$, and $\p\in \A$. Then, by Lemma \ref{tau-propiedad} we have that
$$\T(M_{\dd}T_y \p)(x,\xi)=(x,\dd)(-z,\xi)\T(T_d \p)(x,\xi)\quad\textrm{ a.e. }(x,\xi)\in \Pi_{G/\Dd^*}\times \Pi_{\G/E^*}.$$
Thus, since $\T (T_d\p)(x, \xi) \in J(x,\xi)$, we have that $\T(M_{\dd}T_y \p)(x,\xi)\in J(x,\xi)$
a.e. $(x,\xi)\in \Pi_{G/\Dd^*}\times \Pi_{\G/E^*}$. Therefore,
$$\T(\text{span}\{M_{\dd}T_y\p: \p\in\A, y\in F, \dd\in\Dd\})\subseteq M_J.$$

Using that $\T$ is a continuous function and Remark \ref{m-j-cerrado}, we can compute
\begin{eqnarray*}
\T V&=&\T (\sn\{M_{\dd}T_y\p: \p\in\A, y\in F, \dd\in\Dd\})\\
&\subseteq&\overline{\T(\text{span}\{M_{\dd}T_y\p: \p\in\A, y\in F, \dd\in\Dd\})}\\
&\subseteq& \overline{M_J}=M_J.
\end{eqnarray*}

Let us suppose that $\T V\subsetneq M_J$. Then, there exists $\Psi\in M_J\setminus \{0\}$
orthogonal to $\T V$.
In particular, we have that $\langle \Psi, \T (M_{\dd}T_y\p)\rangle=0$ for all $\p\in\A, y\in F$ and
$\dd\in\Dd$.
Hence, if we write $y=z+d$ with $z\in E$ and $d\in \Pi_{F/E}$, by Lemma \ref{tau-propiedad} we obtain
\begin{eqnarray*}
0&=&\int_{\Pi_{G/\Dd^*}}\int_{\Pi_{\G/\Dd}}\langle \Psi(x,\xi), \T (M_{\dd}T_y\p)(x,\xi)\rangle\, dm_{\G}(\xi)\,dm_G(x)\\
&=&\int_{\Pi_{G/\Dd^*}}\int_{\Pi_{\G/\Dd}}(x,\dd)(-z,\xi)\langle \Psi(x,\xi), \T (T_d\p)(x,\xi)\rangle\, dm_{\G}(\xi)\,dm_G(x)\\
&=&\int_{\Pi_{G/\Dd^*}}\int_{\Pi_{\G/\Dd}}\eta_{\dd}(x)\eta_{-z}(\xi)\langle \Psi(x,\xi), \T (T_d\p)(x,\xi)\rangle\, dm_{\G}(\xi)\,dm_G(x),\\
\end{eqnarray*}
where $\eta_{\dd}$ and $\eta_{-z}$ are as in Proposition \ref{lema-bon}.

If we define $\nu_{(\dd,z)}(x,\xi):=\eta_{\dd}(x)\eta_{-z}(\xi)$, then, using Proposition \ref{lema-bon},
it can be seen that
$\{\nu_{(\dd,z)}\}_{(\dd,z)\in \Dd\times E}$ is an orthogonal basis for $L^2(\Pi_{G/\Dd^*}\times \Pi_{\G/E^*})$.
Therefore, $\langle \Psi(x,\xi), \T (T_d\p)(x,\xi)\rangle=0$ a.e. $(x,\xi)\in \Pi_{G/\Dd^*}\times \Pi_{\G/E^*}$ for all
$d\in \Pi_{F/E}$.

This shows that $\Psi(x,\xi)\in J(x,\xi)^{\perp}$
 a.e. $(x,\xi)\in \Pi_{G/\Dd^*}\times \Pi_{\G/E^*}$ and, since $\Psi\in M_J$ we must have $\Psi=0$,
 which is a contradiction. Thus $\T V=M_J$.

Let us prove now that $J$ is  measurable. If $\mathcal{P}$ is the orthogonal projection on $M_J$, $\mathcal{I}$ is the identity mapping in  $L^2(\Pi_{G/\Dd^*}\times \Pi_{\G/E^*},\ell^2(\Pi_{E^*/\Dd}))$ and
$\Psi\in L^2(\Pi_{G/\Dd^*}\times \Pi_{\G/E^*},\ell^2(\Pi_{E^*/\Dd}))$ we have that $(\mathcal{P}-\mathcal{I})\Psi$ is orthogonal to $M_J$. Then, with the above  reasoning
$(\mathcal{P}-\mathcal{I})\Psi(x,\xi)\in J(x,\xi)^{\perp}$ for a.e. $(x,\xi)\in \Pi_{G/\Dd^*}\times \Pi_{\G/E^*}$.
Thus,
$$P_{(x,\xi)}\big((\mathcal{P}-\mathcal{I})\Psi(x,\xi)\big)=0\quad\textrm{a.e.}\, (x,\xi)\in \Pi_{G/\Dd^*}\times \Pi_{\G/E^*}$$
and therefore, $\mathcal{P}\Psi(x,\xi)=P_{(x,\xi)}(\Psi(x,\xi))$ for a.e. $(x,\xi)\in \Pi_{G/\Dd^*}\times \Pi_{\G/E^*}$.
If in particular $\Psi(x,\xi)=a$ for all $(x,\xi)\in \Pi_{G/\Dd^*}\times \Pi_{\G/E^*}$, we have that
$\mathcal{P}a(x,\xi)=P_{(x,\xi)}(a)$. Therefore, since $(x,\xi)\mapsto\mathcal{P}a(x,\xi)$ is measurable,
$(x,\xi)\mapsto P_{(x,\xi)}a$ is measurable as well.

Conversely. If $J$ is a shift-modulation range function, by Proposition \ref{prop-M-J-SMIS}, $V:=\T^{-1}M_J$ is an $(F,\Dd)$-invariant space. Then,
$V=S_{(F,\Dd)}(\A)$ for some countable subset $\A$ of $L^2(G)$ and, by Lemma \ref{pinta-J} we can define the shift-modulation range function $J'$ as
$$J'(x,\xi)=\Span\{\T (T_d\p)(x, \xi)\,:\,d\in \Pi_{F/E}, \,\p\in\A\}\quad\textrm{a.e.}\, (x,\xi)\in \Pi_{G/\Dd^*}\times \Pi_{\G/E^*}.$$
Thus, as we have shown, $J'$ is measurable and $M_{J'}=\T V=M_J$. Then,  Lemma \ref{range-iguales-ae} gives us  $J=J'$ a.e.

This also proves that the correspondence between  $(F,\Dd)$-invariant spaces and  shift-modulation measurable range functions is one to one and onto.

\end{proof}

\begin{remark}
All the results of this paper are valid for uniform lattices satisfying Standing Assumptions \ref{standing}.
However, for shift-modulation invariant spaces where the translations (modulations) are along the whole group (dual group) we still can give a characterization as a corollary of Wiener's theorem
(see \cite{Hel64}, \cite{Sri64}, \cite{Rud87} and \cite{HS64}). 
\end{remark}

We say that a set $B\subset G$ is $K$-translation invariant if $B+k=B$ for all $k\in K$.

\begin{proposition}\label{prop-translation-total}
Let $V\subset L^2(G)$ be a closed subspace and $\Lambda\subset\G$ be a closed subgroup. Then,
$V$ is $(G,\Lambda)$-invariant if and only if there exists a $m_{\G}$-measurable set $B\subset\G$ which is $\Lambda$-translation invariant such that
$$V=\{f\in L^2(G)\,:\, \supp(\widehat{f})\subseteq B\}.$$
\end{proposition}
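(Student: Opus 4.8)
The plan is to move both invariances to the frequency side: since translations by $k\in G$ act on the Fourier transform as $\widehat{T_kf}=(k,-\cdot)\,\widehat f$ and modulations by $\la\in\Lambda$ act as $\widehat{M_{\la}f}=T_{\la}\widehat f$, a $(G,\Lambda)$-invariant space corresponds to a subspace of $L^2(\G)$ invariant under all modulations (equivalently, a spectral subspace for the translation action of $G$) and under translations by $\Lambda$. The ``if'' direction is then a direct check: if $B$ is measurable and $\Lambda$-translation invariant, then $V=\{f:\supp(\widehat f)\subseteq B\}$ is closed, is $G$-shift invariant because $\widehat{T_kf}$ and $\widehat f$ have the same support, and is $\Lambda$-modulation invariant because $\supp(\widehat{M_{\la}f})=\supp(\widehat f)+\la\subseteq B+\la=B$.

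For the ``only if'' direction I would first use that $V$ is invariant under translations by the whole group $G$. By Wiener's theorem in the form recorded in \cite{Hel64,Sri64,Rud87,HS64}, every closed $G$-translation invariant subspace of $L^2(G)$ is a spectral subspace, so there is a $m_{\G}$-measurable set $B_0\subseteq\G$ with $V=\{f:\supp(\widehat f)\subseteq B_0\}$, that is, $\widehat V=L^2(B_0)$.

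Next I would extract the $\Lambda$-periodicity of $B_0$ from the modulation invariance. For any $A\subseteq B_0$ of finite measure the function $f=\mathcal F^{-1}\chi_A$ lies in $V$; hence $M_{\la}f\in V$ for every $\la\in\Lambda$, and since $\widehat{M_{\la}f}=T_{\la}\chi_A=\chi_{A+\la}$ this forces $A+\la\subseteq B_0$ up to a null set. Exhausting $B_0$ by finite-measure sets (possible because $\G$ is $\sigma$-compact) gives $B_0+\la\subseteq B_0$ a.e., and applying this to $\la$ and $-\la$ yields $B_0+\la=B_0$ a.e.\ for every $\la\in\Lambda$.

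The main obstacle is the last step: upgrading the almost-everywhere relation $B_0+\la=B_0$, which a priori holds only for each $\la$ separately, to a genuinely $\Lambda$-invariant set, the subtlety being that $\Lambda$ can be uncountable. Conceptually, $\chi_{B_0}$ is a $\Lambda$-invariant element of $L^\infty(\G)$, and such functions are exactly the pullbacks of $L^\infty(\G/\Lambda)$ under the quotient map $q:\G\to\G/\Lambda$. To make this precise I would fix a Borel section of $\G/\Lambda$ (available since $G$, and hence $\G$, is second countable), obtaining a measurable identification $\G\cong(\G/\Lambda)\times\Lambda$ that turns each $T_{\la}$ into a shift in the $\Lambda$-coordinate. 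Reducing to a countable dense subgroup of $\Lambda$ and using Fubini, I would show that for almost every coset $c$ the fibre of $B_0$ over $c$ is invariant under all translations of $\Lambda$ and hence, by ergodicity of that action, is null or co-null. Setting $A\subseteq\G/\Lambda$ to be the (measurable) set of cosets with co-null fibre and $B:=q^{-1}(A)$ then produces a genuinely $\Lambda$-invariant measurable set with $B=B_0$ a.e. Since $\{f:\supp(\widehat f)\subseteq B\}$ depends on $B$ only modulo null sets, $V=\{f:\supp(\widehat f)\subseteq B\}$, as required.
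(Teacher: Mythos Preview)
Your proof is correct and follows the same route as the paper: apply Wiener's theorem to the $G$-translation invariance to obtain the spectral set, then use $\Lambda$-modulation invariance to deduce its $\Lambda$-translation invariance. The paper's proof is in fact just a two-line sketch that invokes Wiener and then asserts ``it follows that $B$ is $\Lambda$-translation invariant'', without verifying the ``if'' direction or addressing the a.e.-to-genuine-invariance upgrade for uncountable $\Lambda$; your treatment of that last point is more careful than what the paper provides.
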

\begin{proof}
By Wiener's theorem, there exists a
$m_{\G}$-measurable set $B\subset \G$ satisfying
$V=\{f\in L^2(G)\,:\, \supp(\widehat{f})\subseteq B\}.$
Since $V$ is $\Lambda$-modulation invariant it follows that $B$ is $\Lambda$-translation invariant. 
\end{proof}

The following proposition is  analogous to the previous one for the case when the subspace is invariant along every modulation.
 
\begin{proposition}
Let $V\subset L^2(G)$ and $T\subset G$ be a closed subgroup. Then,
$V$ is $(T,\G)$-invariant  if and only if there exists a $m_{G}$-measurable set $A\subset G$ which is
$T$-translation invariant such that
$$V=\{f\in L^2(G)\,:\, \supp(f)\subseteq A\}.$$
\end{proposition}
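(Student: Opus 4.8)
The plan is to mirror the proof of Proposition \ref{prop-translation-total}, interchanging the roles of $G$ and $\G$ and of translations and modulations. The key observation is that, since the Plancherel transform intertwines modulation with translation ($\widehat{M_\gamma f}=T_\gamma\widehat{f}$), a closed subspace $V\subseteq L^2(G)$ that is $\G$-modulation invariant corresponds to a $\G$-shift invariant subspace $\widehat{V}\subseteq L^2(\G)$. Wiener's theorem applied in $L^2(\G)$, transported back by Fourier inversion (using $\widehat{\G}\approx G$), then yields the ``time-domain'' form of Wiener's theorem: the $\G$-modulation invariant closed subspaces of $L^2(G)$ are exactly those of the form $\{f\in L^2(G):\supp(f)\subseteq A\}$ for some $m_G$-measurable $A\subseteq G$. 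This is the analogue, on the $G$ side, of the statement invoked in Proposition \ref{prop-translation-total}.

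For the forward implication I would argue as follows. Assume $V$ is $(T,\G)$-invariant. In particular $V$ is $\G$-modulation invariant, so the time-domain Wiener theorem produces a measurable $A\subseteq G$ with $V=\{f\in L^2(G):\supp(f)\subseteq A\}$. It then remains to extract $T$-translation invariance of $A$ from the $T$-shift invariance of $V$. Since $T_t\chi_S=\chi_{t+S}$ and $V$ is closed under $T_t$ and $T_{-t}$ for every $t\in T$, testing against $\chi_S$ for finite-measure $S\subseteq A$ gives $m_G\bigl((t+S)\setminus A\bigr)=0$; exhausting $A$ by such sets (possible as $G$ is second countable, hence $\sigma$-finite) and combining the estimates for $t$ and $-t$ yields $m_G\bigl((t+A)\triangle A\bigr)=0$ for each $t\in T$.

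The converse is the routine direction and I would check it directly. Given a $T$-translation invariant measurable set $A$, put $V=\{f\in L^2(G):\supp(f)\subseteq A\}$. For $t\in T$ one has $\supp(T_tf)=t+\supp(f)\subseteq t+A=A$, so $V$ is $T$-shift invariant; and for $\gamma\in\G$ the modulation $M_\gamma f=(\cdot,\gamma)f$ satisfies $\supp(M_\gamma f)=\supp(f)\subseteq A$ because $|(\cdot,\gamma)|\equiv1$, so $V$ is $\G$-modulation invariant. Hence $V$ is $(T,\G)$-invariant.

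The main obstacle is purely measure-theoretic: upgrading ``$A$ is $T$-invariant modulo null sets'' to a genuinely $T$-invariant representative, since $T$ may be uncountable and a naive union over $t\in T$ of the null sets $(t+A)\triangle A$ need not be null. I would resolve this with a Fubini argument: the jointly measurable function $(x,t)\mapsto|\chi_A(x+t)-\chi_A(x)|$ on $G\times T$ has vanishing $x$-integral for each fixed $t$, hence vanishes $m_G\times m_T$-almost everywhere, so $\chi_A$ is $T$-invariant for a.e.\ $x$. The standard fact that a measurable function invariant a.e.\ under a closed subgroup $T$ agrees a.e.\ with the lift of a measurable function on $G/T$ then lets me replace $A$ by $q^{-1}(B)$, where $q:G\to G/T$ is the quotient map and $B\subseteq G/T$ is measurable; this set is genuinely $T$-translation invariant and differs from $A$ by a null set, so the associated space $V$ is unchanged.
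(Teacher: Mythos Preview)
The paper does not give a separate proof of this proposition; it merely states it as the analogue of Proposition~\ref{prop-translation-total}, whose two-line proof reads: apply Wiener's theorem to obtain the set, then observe that the remaining invariance of $V$ forces translation invariance of that set. Your argument carries out exactly this analogy---Wiener's theorem transported to the time domain via the Plancherel transform yields $A$, and $T$-shift invariance of $V$ forces $T$-translation invariance of $A$---so the approach is the same and the proof is correct.

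You in fact go beyond what the paper does. First, you supply the (easy) converse direction, which the paper omits even for Proposition~\ref{prop-translation-total}. Second, you confront the measure-theoretic point that for uncountable $T$ one a priori obtains only $m_G\bigl((t+A)\triangle A\bigr)=0$ for each individual $t$, and you indicate the standard Fubini/quotient argument to upgrade this to a genuinely $T$-invariant representative of $A$. The paper's brief proof of Proposition~\ref{prop-translation-total} glosses over this issue entirely (and arguably intends ``translation invariant'' only in the a.e.\ sense, since $A$ is anyway determined by $V$ only up to null sets), so your treatment is more careful than what the paper provides.
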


Finally, we have the following corollary. 
\begin{corollary}
Let $V\subset L^2(G)$ be a non zero closed subspace. If
$V$ is $(G,\G)$-invariant, then $V=L^2(G)$.
\end{corollary}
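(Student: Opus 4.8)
The plan is to invoke Proposition \ref{prop-translation-total} directly, taking $\Lambda = \G$. Since $\G$ is itself a closed subgroup of $\G$ and $(G,\G)$-invariance is exactly $(G,\Lambda)$-invariance for $\Lambda=\G$, the proposition applies and produces a $m_{\G}$-measurable set $B\subseteq\G$ that is $\G$-translation invariant and satisfies
$$V=\{f\in L^2(G)\,:\,\supp(\widehat{f})\subseteq B\}.$$
The whole argument then reduces to identifying $B$, and I expect no serious obstacle beyond correctly extracting the consequences of full-group invariance.

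First I would rule out $B=\emptyset$. Here the hypothesis $V\neq\{0\}$ enters: choosing a nonzero $f\in V$ we get $\widehat{f}\neq 0$ in $L^2(\G)$, so $\supp(\widehat{f})$ has positive $m_{\G}$-measure, and the inclusion $\supp(\widehat{f})\subseteq B$ forces $B$ to be nonempty. This is the one place the nontriviality of $V$ is used, and it is the only point requiring a little care, since it is the step that converts an analytic hypothesis into a statement about the set $B$.

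Next I would exploit the \emph{exact} set invariance $B+\gamma=B$ for every $\gamma\in\G$ (as in the definition preceding the proposition). Fixing any $b_0\in B$ and an arbitrary $\gamma\in\G$, the identity $B+(\gamma-b_0)=B$ yields $\gamma=b_0+(\gamma-b_0)\in B+(\gamma-b_0)=B$; since $\gamma$ was arbitrary this gives $B=\G$. Finally, $V=\{f\in L^2(G):\supp(\widehat{f})\subseteq\G\}=L^2(G)$, because every $\widehat{f}$ is trivially supported in $\G$ and the Plancherel transform maps $L^2(G)$ onto $L^2(\G)$. (Equivalently, one could run the companion proposition with $T=G$ to conclude $A=G$ and $\supp(f)\subseteq A$, but a single application suffices.)
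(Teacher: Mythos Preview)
Your proof is correct and is exactly the argument the paper has in mind: the corollary is stated without proof precisely because it follows immediately from Proposition~\ref{prop-translation-total} (or its companion) in the way you describe. The only points worth a second glance are the ones you already flag---that $V\neq\{0\}$ forces $B$ to have positive $m_{\G}$-measure (hence be nonempty), and that the paper's definition of $\Lambda$-translation invariance is the \emph{exact} set identity $B+\gamma=B$, which makes the step from ``$B$ nonempty and fully $\G$-invariant'' to ``$B=\G$'' a one-line group argument.
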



\section{Examples}\label{sec-example}

In order to illustrate the constructions of the previous sections we now present some examples. 

\begin{example}\label{example-1}
Let $G=\R$. Then $\G=\R$. Now fix  $F=\frac{2}{3}\Z$ as the lattice for translations and $\Dd=\Z$
as the lattice for modulations. Since $\Dd^*=\Z$, then 
the lattice $E=F\cap\Dd^*$ is  $2\Z$ and  $F+\Dd^*=\frac{2}{3}\Z+\Z=\frac1{3}\Z$.
Thus, $E^*=\frac1{2}\Z$ and $E^*/\Dd\approx\Z_2$.
Hence, we can fix $\Pi_{\G/E^*}=[0,\frac1{2})$ and $\Pi_{E^*/\Dd}=\{0, \frac1{2}\}$. 

On the other hand, we set $\Pi_{F/E}=\{0, -\frac{2}{3}, -\frac{4}{3}\}$ and 
$\Pi_{G/(F+\Dd^*)}=[0, \frac1{3})$.
Then, by equation (\ref{sec-H}), 
$\Pi_{G/\Dd^*}= [0, \frac1{3})\cup [\frac{2}{3}, 1)\cup [\frac{4}{3}, \frac{5}{3})$ is a section for $G/\Dd^*$.

Therefore, the fundamental isometry of Definition \ref{def-tau} applied to $f\in L^2(\R)$ is given by the formula 
$$\T f(x,\xi)=(Zf(x,\xi), Zf(x, \xi+\frac1{2})),$$
where $x\in [0, \frac1{3})\cup [\frac{2}{3}, 1)\cup [\frac{4}{3}, \frac{5}{3})$, $\xi\in [0,\frac1{2})$ and 
$Z$ is the Zak transform in $\R$ given by the formula 
$Zf(x,\xi)=\sum_{k\in\Z}f(x-k)e^{2\pi ik\xi}$.

Let $\p\in L^2(\R)$ and let $S_{F,\Dd}(\p)$ be the $(F,\Dd)$-invariant space generated by $\{\p\}$. If $J$ is the shift-modulation range function associated to $S_{F,\Dd}(\p)$ through Theorem \ref{thm-ppal}, then
$$J(x,\xi)=\Span\{ \T\p(x, \xi),  \T\p(x+\frac{2}{3}, \xi),  \T\p(x +\frac{4}{3}, \xi)\},$$
where $x\in [0, \frac1{3})$ and $\xi\in [0,\frac1{2})$.
\begin{flushright}
$\square$
\end{flushright}

\end{example}

In the next example we  change the lattice of translations.  We will see that the fiberization isometry in Example \ref{example-2} has the same formula as in Example \ref{example-1}. 

\begin{example}\label{example-2}
Consider now $G=\R$, $F=\frac{2}{5}\Z$ and $\Dd=\Z$. Then, $E=2\Z$. With the same reasoning as in Example \ref{example-1} we have that
$\Pi_{G/\Dd^*}=[0,\frac1{5})\cup[\frac{2}{5}, \frac{3}{5})\cup[\frac{4}{5}, 1)\cup[\frac{6}{5},\frac{7}{5})
\cup[\frac{8}{5},\frac{9}{5})$ and $\Pi_{\G/E^*}=[0, \frac1{2})$.
Then, if $\p\in L^2(\R)$ the fiberzation isometry is 
$$\T \p(x,\xi)=(Z\p(x,\xi), Z\p(x, \xi+\frac1{2})),$$
where $x\in [0,\frac1{5})\cup[\frac{2}{5}, \frac{3}{5})\cup[\frac{4}{5}, 1)\cup[\frac{6}{5},\frac{7}{5})
\cup[\frac{8}{5},\frac{9}{5})$, $\xi\in [0,\frac1{2})$ and $Z$ is the usual Zak transform in $\R$.

In this case, since $\Pi_{F/E}=\{0, -\frac{2}{5}, -\frac{4}{5}, -\frac{6}{5}, -\frac{8}{5}\}$, the shift-modulation range function associated to $S_{F,\Dd}(\p)$ is 
$$J(x,\xi)=\Span\{ \T\p(x, \xi),  \T\p(x+\frac{2}{5}, \xi),  \T\p(x +\frac{4}{5}, \xi),  
\T\p(x +\frac{6}{5}, \xi),  \T\p(x +\frac{8}{5}, \xi)\},$$
where $x\in [0, \frac1{5})$ and $\xi\in [0,\frac1{2})$.
\begin{flushright}
$\square$
\end{flushright}

\end{example}

\begin{remark}
Note that in Examples \ref{example-1} and \ref{example-2}, the fiberization isometries have the same formula but different domains. This is due to the fact that the lattice $E$ is the same in both cases. The difference here appears in the translations that are outside of $E$ and they are mainly reflected in the shift-modulation range function.  
\end{remark}

Our last example is for $G=\Tor$.

\begin{example}
Let $G=\Tor$. Then, $\G=\Z$. Let us fix $m, n\in\N$. We consider $F=\frac1{m}\Z_m=\{0, \frac1{m}, \frac{2}{m}, \dots, 
\frac{m-1}{m}\}\subseteq \Tor$ as the lattice for translations and $\Dd=n\Z$ as the lattice for modulations. 
Since $\Dd^*=\frac1{n}\Z_n=\{0, \frac1{n}, \frac{2}{n}, \dots, 
\frac{n-1}{n}\}$, we have that $E=F\cap\Dd^*=\frac1{(n:m)}\Z_{(n:m)}$ where $(n:m)$ is the greatest common divisor between $n$ and $m$, and that $F+\Dd^*=\frac1{[n:m]}\Z_{[n:m]}$ where $[n:m]$ is the least common multiple between $n$ and $m$. 
The construction of the fiberization isometry and the shift-modulation range function can be done for general $n$ and $m$. For the sake of simplicity we will fix $m=15$ and $n=12$.

Since $(12:15)=3$ and $[12:15]=60$, we have that $E=\{0, \frac1{3}, \frac{2}{3}\}$ and 
$F+\Dd^*=\frac1{60}\Z_{60}$. Then, $E^*=3\Z$ and we can fix $\Pi_{E^*/\Dd}$ as $\{1, 3, 6, 9\}$ and 
$\Pi_{\G/E^*}$ as $\{0, 1, 2\}$.
Now, in order to construct $\Pi_{G/\Dd^*}$ following equation (\ref{sec-H}) we choose $\Pi_{G/F+\Dd^*}=[0, \frac1{60})$ and $\Pi_{F/E}=\{0, \frac{3}{15}, \frac{6}{15}, \frac{9}{15}, \frac{12}{15}\}$. 
Then, the section  $\Pi_{G/\Dd^*}$ is $\Pi_{G/\Dd^*}=[0, \frac1{60})\cup [\frac{12}{60}, \frac{13}{60})\cup[\frac{24}{60}, \frac{25}{60})\cup[\frac{36}{60}, \frac{37}{60})\cup[\frac{48}{60}, \frac{49}{60})$.

For $\p\in L^2(\Tor)$ the fiberization isometry applied to $\p$ is 
$$\T\p(x, \xi)=(Z\p(x,\xi), Z\p(x,\xi+3), Z\p(x,\xi+ 6), Z\p(x,\xi+9))$$
where $x\in \Pi_{G/\Dd^*}$, $\xi\in \Pi_{\G/E^*}=\{0, 1, 2\}$ and the Zak transform  is given by 
$Z\p(x, \xi)=\sum_{j=0}^{11}\p(x-\frac{j}{12})e^{2\pi i\frac{j}{12}\xi}$.

We now focus in the particular case when $\p=\chi_{[0, \frac1{60})}$.  We have that for all $\xi\in \{0, 1, 2\}$, $Z\p(x, \xi)=1$ if $x\in [0, \frac1{60})$ and  $Z\p(x, \xi)=0$ if $x\neq [0, \frac1{60})$. Moreover, 
it can be proven that for each $r\in\{0,3 ,6, 9, 12\}$, $Z(T_{\frac{r}{15}}\p)(x, \xi)=1$ if $x\in[\frac{4r}{60}, \frac{4r+1}{60})$
and $Z(T_{\frac{r}{15}}\p)(x, \xi)=0$ if $x\in \Pi_{G/\Dd^*}\setminus[\frac{4r}{60}, \frac{4r+1}{60})$.

Then, for all $\xi\in\{0, 1, 2\}$ and for each $r\in\{0,3 ,6, 9, 12\}$
\begin{equation}\label{tau-example}
\T(T_{\frac{r}{15}}\p)(x, \xi)=
\begin{cases}
(1,1,1,1)&\textrm{ if } x\in  [\frac{4r}{60}, \frac{4r+1}{60})\\
(0,0,0,0)&\textrm{ if } x\in \Pi_{G/\Dd^*}\setminus  [\frac{4r}{60}, \frac{4r+1}{60}).
\end{cases}
\end{equation}

The shift-modulation range function associated to $S_{(F, \Dd)}(\p)$ is 
$$J(x, \xi)=\Span\{ \T (T_{\frac{r}{15}}\p)(x,\xi):\, r\in\{0,3 ,6, 9, 12\}\} ,$$ 
and, using (\ref{tau-example}) we obtain
$J(x, \xi)=\Span\{(1,1,1,1)\},$ for $(x,\xi)\in [0,\frac1{60})\times\{0, 1, 2\}$.

Consider now the translation of $\p=\chi_{[0, \frac1{60})}$ by the element $\frac1{2}$. We will show that 
$T_{\frac1{2}}\p\notin S_{(F, \Dd)}(\p)$.
With similar computations as above, it can be shown that, for all $\xi\in\{0, 1, 2\}$
\begin{equation*}
\T(T_{\frac{1}{2}}\p)(x, \xi)=
\begin{cases}
(1+e^{\pi i\xi},1-e^{\pi i\xi},1+e^{\pi i\xi},1-e^{\pi i\xi})&\textrm{if } x\in  [0, \frac{1}{60})\\
(0,0,0,0)&\textrm{if } x\in \Pi_{G/\Dd^*}\setminus  [0, \frac{1}{60}).
\end{cases}
\end{equation*}
From this, we have that, for $x\in[0,\frac1{60})$,  $\T(T_{\frac{1}{2}}\p)(x, 1)=(0,2,0,2)\notin J(x, 1)$.
Using Theorem \ref{thm-ppal} we can conclude that $T_{\frac1{2}}\p\notin S_{(F, \Dd)}(\p)$.
\begin{flushright}
$\square$
\end{flushright}
\end{example}

\section*{Acknowledgements}
We want to thank the referees for their suggestions that led to a great improvement of the manuscript. We also thank to Magal\'i Anastasio for fruitful discussions.

\thispagestyle{empty}

\normalsize
\baselineskip=17pt


\begin{thebibliography}{99}
\addcontentsline{toc}{chapter}{Bibliograf\'\i a}

\bibitem{Bow07}
Marcin Bownik, \emph{The structure of shift-modulation invariant spaces: the
  rational case}, J. Funct. Anal. \textbf{244} (2007), no.~1, 172--219.

\bibitem{CP10}
Carlos Cabrelli and Victoria Paternostro, \emph{Shift-invariant spaces on {LCA}
  groups}, J. Funct. Anal. \textbf{258} (2010), no.~6, 2034--2059.

\bibitem{CC01a}
Peter~G. Casazza and Ole Christensen, \emph{Weyl-{H}eisenberg frames for
  subspaces of {$L^2(\bold R)$}}, Proc. Amer. Math. Soc. \textbf{129} (2001),
  no.~1, 145--154 (electronic).

\bibitem{CC01b}
Peter~G. Casazza, Ole Christensen, and A.~J. E.~M. Janssen,
  \emph{Weyl-{H}eisenberg frames, translation invariant systems and the
  {W}alnut representation}, J. Funct. Anal. \textbf{180} (2001), no.~1,
  85--147.

\bibitem{Chr03}
Ole Christensen, \emph{An introduction to frames and {R}iesz bases}, Applied
  and Numerical Harmonic Analysis, Birkh\"auser Boston Inc., Boston, MA, 2003.

\bibitem{Dau92}
Ingrid Daubechies, \emph{Ten lectures on wavelets}, CBMS-NSF Regional
  Conference Series in Applied Mathematics, vol.~61, Society for Industrial and
  Applied Mathematics (SIAM), Philadelphia, PA, 1992.

\bibitem{FG64}
J.~Feldman and F.~P. Greenleaf, \emph{Existence of {B}orel transversals in
  groups}, Pacific J. Math. \textbf{25} (1968), 455--461.

\bibitem{GD01}
Jean-Pierre Gabardo and Deguang Han, \emph{Subspace {W}eyl-{H}eisenberg
  frames}, J. Fourier Anal. Appl. \textbf{7} (2001), no.~4, 419--433.

\bibitem{GD04}
\bysame, \emph{Balian-{L}ow phenomenon for subspace {G}abor frames}, J. Math.
  Phys. \textbf{45} (2004), no.~8, 3362--3378.

\bibitem{Gel50}
I.~M. Gel'fand, \emph{Expansion in characteristic functions of an
  equation with periodic coefficients}, Doklady Akad. Nauk SSSR (N.S.)
  \textbf{73} (1950), 1117--1120.

\bibitem{Gro01}
Karlheinz Gr{\"o}chenig, \emph{Foundations of time-frequency analysis}, Applied
  and Numerical Harmonic Analysis, Birkh\"auser Boston Inc., Boston, MA, 2001.

\bibitem{HS64}
Morisuke Hasumi and T.~P. Srinivasan, \emph{Doubly invariant subspaces. {II}},
  Pacific J. Math. \textbf{14} (1964), 525--535.

\bibitem{Hel64}
Henry Helson, \emph{Lectures on invariant subspaces}, Academic Press, New York,
  1964.

\bibitem{KR08}
R.~A. Kamyabi~Gol and R.~Raisi Tousi, \emph{The structure of shift invariant
  spaces on a locally compact abelian group}, J. Math. Anal. Appl. \textbf{340}
  (2008), no.~1, 219--225.

\bibitem{KK98}
Eberhard Kaniuth and Gitta Kutyniok, \emph{Zeros of the {Z}ak transform on
  locally compact abelian groups}, Proc. Amer. Math. Soc. \textbf{126} (1998),
  no.~12, 3561--3569.

\bibitem{Rud87}
Walter Rudin, \emph{Real and complex analysis}, third ed., McGraw-Hill Book
  Co., New York, 1987.

\bibitem{Rud62}
\bysame, \emph{Fourier analysis on groups}, Wiley Classics Library, John Wiley
  \& Sons Inc., New York, 1990, Reprint of the 1962 original, A
  Wiley-Interscience Publication.

\bibitem{Sri64}
T.~P. Srinivasan, \emph{Doubly invariant subspaces}, Pacific J. Math.
  \textbf{14} (1964), 701--707.

\bibitem{wei64}
Andr{\'e} Weil, \emph{Sur certains groupes d'op\'erateurs unitaires}, Acta
  Math. \textbf{111} (1964), 143--211.

\bibitem{Zak67}
J.~Zak, \emph{Finite translations in solid state physics}, Phys. Rev. Lett
  \textbf{19} (1967), 1385--1397.

\end{thebibliography}
\end{document}